\documentclass[12pt]{amsart}
\usepackage{amsfonts}
\usepackage{amsmath}
\usepackage{amssymb}
\usepackage[margin=1.2in]{geometry}
\usepackage{verbatim}
\setcounter{MaxMatrixCols}{30}

\newtheorem{theorem}{Theorem}[section]
\newtheorem{corollary}[theorem]{Corollary}
\newtheorem{lemma}[theorem]{Lemma}
\newtheorem{proposition}[theorem]{Proposition}

\theoremstyle{definition}

\newtheorem{remark}[theorem]{Remark}


\numberwithin{equation}{section}

\begin{document}

\title[Hal\'{a}sz's theorem for Beurling numbers]{Hal\'{a}sz's theorem for Beurling generalized numbers}

\author[G.~Debruyne]{Gregory Debruyne}
\thanks{G.~Debruyne gratefully acknowledges support by a Postdoctoral Fellowship of the Research Foundation--Flanders}

\author[F.~Maes]{Frederick Maes}

\author[J.~Vindas]{Jasson Vindas} 
\thanks{The work of J. Vindas was supported by the Research
  Foundation--Flanders, through the FWO-grant number 1510119N}

\address{Department of Mathematics: Analysis, Logic and Discrete Mathematics\\ Ghent University\\
  Krijgslaan 281\\ B 9000 Ghent\\ Belgium} 
  \email{gregory.debruyne@UGent.be}
  \email{frederick.maes@UGent.be}
  \email{jasson.vindas@UGent.be}

\subjclass[2010]{Primary 11N37, 11N80; Secondary  11N05, 11N64, 11M41}
\keywords{Hal\'{a}sz theorem; Hal\'{a}sz-Wirsing mean-value theorem; Beurling generalized primes and integers; multiplicative functions; multiplicative arithmetic measures; mean-value vanishing of the
  M\"{o}bius function} 

\begin{abstract}
We show that Hal\'{a}sz's theorem holds for Beurling numbers under the following two mild hypotheses on the generalized number system: existence of a positive density for the generalized integers and a Chebyshev upper bound for the generalized primes.
\end{abstract}

\maketitle

\section{Introduction}  \label{sec:intro}

Hal\'{a}sz's theorem \cite{Halasz1968} is a cornerstone in classical probabilistic number theory \cite{ElliottBookI,T2015}. This important result has been generalized
 by several authors \cite{L-R2001,zhang1987} to the context of abstract analytic number theory; the most general version so far being the one recently obtained by Zhang for Beurling numbers in \cite{zhang2018}.

Let $1 < p_1 \le p_2
\le \ldots\,$ be a Beurling generalized prime number system. Its associated set of generalized integers (cf. \cite{BD69,beurling1937, diamond-zhangbook,MV07}) is the multiplicative semigroup generated by 1 and the generalized primes, which we arrange in a non-decreasing sequence taking multiplicities into account, $n_0=1<n_1\leq n_2\leq n_3\leq \dots.$ Denote as $N$ and $\pi$ the counting functions of the generalized integers and primes. As in classical number theory, we consider the weighted prime counting functions
$$
\Pi(x)=\sum_{p_k^{\alpha_k}\leq x} 1/\alpha_k \quad \mbox{ and } \quad \psi(x)= \int_{1}
^{x}\log u\: \mathrm{d}\Pi(u).$$
Given a function of (local) bounded variation $G$, we denote its Mellin-Stieltjes transform as $\widehat{G}(s)=\int_{1^{-}}^{\infty}x^{-s} \mathrm{d}G(x)$ and we use the notation $s=\sigma+it$ for complex variables.

Zhang's version of Hal\'{a}sz's theorem reads as follows. His result generalizes \cite[Theorem 3.1]{D-D-V2018}, where the set of hypotheses \eqref{eqChebyshev}, \eqref{eqDensity}, and \eqref{L1condRestricted} were actually introduced.
\begin{theorem}[{Zhang \cite{zhang2018}}]
\label{Halasz th1} Suppose that the generalized number system satisfies a Chebyshev upper estimate
\begin{equation}
\label{eqChebyshev} \psi(x)\ll x,
\end{equation}
the generalized numbers have positive density
\begin{equation}
\label{eqDensity}
N(x)\sim a x
\end{equation}
for some $a>0$, and ($\sigma\to 1^{+}$)
\begin{equation}
\label{L1condRestricted} \int_{1}^{\infty} \frac{|N(x)-ax|}{x^{\sigma+1}}\:
\mathrm{d}x \ll (\sigma-1)^{-\beta}\, \qquad\mbox{for some } \beta\in [0,1/2).
\end{equation}
Let $g$ be a completely multiplicative function such that $|g(n_k)|\leq 1$ for each $n_k$ and set $G(x)=\sum_{n_k\leq x} g(n_k)$. Then,
\begin{equation*}
G(x)\sim c x 
\end{equation*}
if and only if
\begin{equation*}
\widehat{G}(s) = \frac{c}{s-1}+o\left(\frac{1}{\sigma-1}\right) 
\end{equation*}
 uniformly for $t$ on compact intervals.
\end{theorem}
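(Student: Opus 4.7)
The forward implication, $G(x)\sim cx\Rightarrow \widehat{G}(s)=c/(s-1)+o(1/(\sigma-1))$, is a direct Abelian calculation. Integration by parts gives $\widehat{G}(s) = s\int_{1}^{\infty} G(x)x^{-s-1}\,\mathrm{d}x$ for $\sigma>1$. Writing $G(x)=cx+\varepsilon(x)x$ with $\varepsilon(x)\to 0$, one obtains
\[
  \widehat{G}(s) = \frac{cs}{s-1} + s\int_{1}^{\infty}\varepsilon(x)x^{-s}\,\mathrm{d}x = \frac{c}{s-1} + c + s\int_{1}^{\infty}\varepsilon(x)x^{-s}\,\mathrm{d}x,
\]
and splitting the last integral at a sufficiently large $X_{0}$ shows it is $o(1/(\sigma-1))$ uniformly for $t$ in compact intervals.

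For the converse, my plan is to reduce to a Tauberian problem for a signed function and then solve it by a smoothed Mellin inversion. Applying the easy direction to $N$ via \eqref{eqDensity} yields $\widehat{N}(s) = a/(s-1) + o(1/(\sigma-1))$. Set $H(x) := G(x) - (c/a)N(x)$; subtracting the two Mellin asymptotics gives $\widehat{H}(s) = \widehat{G}(s) - (c/a)\widehat{N}(s) = o(1/(\sigma-1))$ uniformly for $t$ on compact intervals. Since $G(x)-cx = H(x)+(c/a)(N(x)-ax) = H(x)+o(x)$, it suffices to prove $H(x) = o(x)$.

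For that, I would choose a nonnegative $\phi\in C_{c}^{\infty}(0,\infty)$ whose Mellin transform $\widetilde{\phi}$ decays rapidly on vertical lines and exploit, for $\sigma_{0} = 1+1/\log T$, a smoothed Perron identity of the form
\[
  \int_{1}^{\infty} H(x)\,\phi(x/T)\,\frac{\mathrm{d}x}{x^{2}} = \frac{1}{2\pi i}\int_{\sigma_{0}-i\infty}^{\sigma_{0}+i\infty} \widehat{H}(s)\,\widetilde{\phi}(s)\,T^{s-1}\,\mathrm{d}s.
\]
Split the contour into the central window $|t|\le T_{0}$, handled directly by the hypothesis $\widehat{H}(\sigma_{0}+it) = o(1/(\sigma_{0}-1))$, and the tails $|t|>T_{0}$, treated by Cauchy--Schwarz; this should yield $\int H(x)\phi(x/T)\,\mathrm{d}x/x^{2} = o(1/T)$. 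To upgrade this smoothed bound to the pointwise conclusion $H(T) = o(T)$, use the slow-decrease estimate $|H(x')-H(x)|\le (1+|c|/a)(N(x')-N(x))$ together with the near-Lipschitz behavior of $N$ over short intervals, which follows from the Chebyshev bound \eqref{eqChebyshev}.

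The main obstacle will be controlling the tail $|t|>T_{0}$ of the Mellin integral, since the hypothesis on $\widehat{H}$ is only uniform on compacts in $t$. Here the strict inequality $\beta<1/2$ in \eqref{L1condRestricted} is essential: by Plancherel, the bound $\int_{1}^{\infty}|N(x)-ax|x^{-\sigma-1}\,\mathrm{d}x\ll(\sigma-1)^{-\beta}$ translates into mean-square control of $\widehat{N}(s)-a/(s-1)$ on vertical lines slightly to the right of $\sigma=1$. Combined with $|g(n_{k})|\le 1$ and a Hal\'asz--Montgomery-type mean-value inequality bounding $\int_{-T}^{T}|\widehat{G}(\sigma_{0}+it)|^{2}\,\mathrm{d}t$ by partial sums of $|g(n_{k})|^{2}$, the exponent margin left by $\beta<1/2$ is exactly what allows Cauchy--Schwarz to close the tail estimate and complete the proof.
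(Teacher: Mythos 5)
Your forward implication is fine and matches the paper's view that this direction is an elementary Abelian computation. The converse, however, is where the theorem lives, and your sketch has a genuine gap at exactly the point you flag as ``the main obstacle.''

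The central problem is the tail of the Mellin integral. Your plan is a single smoothing of $H$ against $\phi(\cdot/T)$, then Cauchy--Schwarz. The best mean-square information you can hope for is $\int_{\Re s=\sigma_0}\left|\widehat{H}(s)/s\right|^{2}|\mathrm{d}s|\ll 1/(\sigma_0-1)\asymp\log T$, coming from Plancherel and the trivial bound $|H(x)|\ll N(x)\ll x$. After Cauchy--Schwarz against the smooth weight, the tail contribution is then of size $\sqrt{\log T}$ times a small constant depending on the cutoff $T_{0}$, and this does \emph{not} go to zero as $T\to\infty$: you cannot take $T_{0}\to\infty$ first because the central-window estimate also depends on $T_{0}$. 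The paper avoids this obstruction with two ideas you do not have: (a) a \emph{double} smoothing, replacing $G$ by the function $F$ of Lemma \ref{Halasz l 1} whose Mellin transform is $-\widehat{G}'(s)/s$, and (b) the factorization $\widehat{G}'(s)=\widehat{G}(s)\cdot\widehat{G}'(s)/\widehat{G}(s)$, where the $L^{2}$ bound \eqref{Rankin eq 5} on $\widehat{G}'/(s\widehat{G})$ comes from the arithmetic hypothesis $\int_{1}^{x}|g|\log u\,\mathrm{d}\Pi\ll x$ (here the Chebyshev bound and complete multiplicativity of $g$ are consumed). This supplies exactly the compensating $\sqrt{\log x}$ factor that lets the tail close, and it uses the multiplicative structure of $G$, whereas your $H=G-(c/a)N$ is not multiplicative, so the trick is unavailable after that subtraction.

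Two further points. First, your claim that the $L^{1}$ condition \eqref{L1condRestricted} ``by Plancherel'' gives mean-square control of $\widehat{N}-a/(s-1)$ is not a direct application of Plancherel (which needs $L^{2}$ input, not $L^{1}$); it can be rescued by interpolating with the trivial $|N(x)-ax|\ll x$, but the resulting mean-square bound on $\widehat{N}$ does not, by itself, control $\widehat{G}$ on the tail. Second, the ``Hal\'asz--Montgomery type mean-value inequality bounding $\int_{-T}^{T}|\widehat{G}(\sigma_{0}+it)|^{2}\mathrm{d}t$ by partial sums of $|g(n_{k})|^{2}$'' that you invoke is not available off the shelf for Beurling systems; the Montgomery--Vaughan proof relies on the spacing of $\{\log n:n\le N\}$ in $\mathbb{Z}$, which is precisely what Beurling systems fail to provide. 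In the paper this role is played by Proposition \ref{prop Rankin} (Rankin's trick applied to the multiplicative measure $\exp^{\ast}(g\,\mathrm{d}\Pi)$ and its shifts/powers), which bounds $G(x)/x$ pointwise and then transfers to mean-square via Plancherel (Corollary \ref{lemma Rankin}). Finally, the near-Lipschitz (slow-oscillation) control on $N$ over short multiplicative intervals that you use to de-smooth comes from $N(x)\sim ax$, not from the Chebyshev bound.

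So the reduction and the Abelian direction are sound, but the tail estimate as proposed does not close, and the two lemmas you would need to fill the gap are exactly the paper's Proposition \ref{prop Rankin} and Corollary \ref{lemma Rankin}, together with the double-smoothing device of Lemma \ref{Halasz l 1}.
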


The aim of this article is to considerably improve Theorem \ref{Halasz th1}. 
We shall show that it still holds if one removes the condition \eqref{L1condRestricted} from its hypotheses. In addition to hold under weaker assumptions, our results  are somewhat more general as they also involve  slowly varying functions in the asymptotic formulas and apply to multiplicative functions on non necessarily discrete number systems. We mention that our method here is inspired by  the treatment of Schwarz and Spilker from \cite{S-S1994} of the Daboussi-Indlekofer elementary proof \cite{Daboussi-I1992} of the classical Hal\'{a}sz theorem. 

Finally, it should be pointed out that our considerations yield the following improvement to \cite[Theorem 3.1]{D-D-V2018}, where $M$ is the sum function of the M\"{o}bius function of a generalized number system. 

\begin{corollary}
\label{Halasz c 2} The positive density condition  \eqref{eqDensity} 
and the Chebyshev upper bound \eqref{eqChebyshev} imply the estimate $M(x)=o(x)$.
\end{corollary}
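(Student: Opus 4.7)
The plan is to apply the improved version of Hal\'{a}sz's theorem established in this paper---the strengthening of Theorem \ref{Halasz th1} obtained by dropping \eqref{L1condRestricted} and allowing (not necessarily completely) multiplicative functions---to the M\"{o}bius function $\mu$ of the Beurling number system. Since $\mu(1) = 1$, $\mu(p) = -1$, and $\mu(p^k) = 0$ for $k \geq 2$, the function $\mu$ is multiplicative and $|\mu(n_k)| \leq 1$, so the improved theorem applies. By its equivalence, $M(x) = o(x)$ will follow once one establishes the Mellin-side estimate
$$\widehat{M}(s) = o\!\left(\frac{1}{\sigma-1}\right), \qquad \sigma \to 1^+,$$
uniformly for $t$ on compact intervals.

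To this end, the identity $\widehat{M}(s) = 1/\zeta(s)$, where $\zeta(s) = \widehat{N}(s)$ is the Beurling zeta function, follows at once from $\sum_{d \mid n_k} \mu(d) = [n_k = 1]$. The required estimate is therefore equivalent to $(\sigma-1)/\zeta(s) \to 0$ as $\sigma \to 1^+$, uniformly in $t$ on compacta. Near $t = 0$, the positive density \eqref{eqDensity} yields $\zeta(s) \sim a/(s-1)$ by an Abelian argument, so the bound is immediate. For $t$ in a compact set bounded away from $0$, the estimate reduces to the non-vanishing $\zeta(1+it) \neq 0$ for real $t \neq 0$, with suitable uniformity, together with the fact that $\zeta$ does not blow up too fast along such subsets of the line $\sigma=1$.

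The crux---and the main obstacle---is this non-vanishing on $\sigma = 1$. I would attack it through the classical Mertens-style inequality $|\zeta(\sigma)^3 \zeta(\sigma+it)^4 \zeta(\sigma+2it)| \geq 1$, which survives in the Beurling setting because the Euler product for $\zeta$ is preserved. The Chebyshev upper estimate \eqref{eqChebyshev} controls the size of $\zeta(\sigma + 2it)$ as $\sigma \to 1^+$, so that the cubic pole $\zeta(\sigma)^3 \asymp (\sigma-1)^{-3}$ coming from \eqref{eqDensity} forces $\zeta(\sigma+it)$ not to vanish on the line $\sigma = 1$. Assembling these pieces gives the desired uniform Mellin estimate and hence, via the improved Hal\'{a}sz theorem, $M(x) = o(x)$.
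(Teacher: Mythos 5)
Your overall strategy is the same as the paper's: interpret $M$ as the summatory function of the multiplicative measure $\mathrm{d}M=\exp^{\ast}(-\mathrm{d}\Pi)$, note $\widehat{M}(s)=1/\zeta(s)$, verify the Mellin-side estimate $1/\zeta(s)=o\bigl(1/(\sigma-1)\bigr)$ uniformly on compacts, and then invoke Theorem~\ref{Halasz th3}. The paper simply cites \cite[Lemma~3.6]{D-D-V2018} for the Mellin estimate (and also remarks that the corollary follows at once from Corollary~\ref{Halasz c3} applied to $g\equiv-1$, since $\int_1^{\infty}2u^{-1}\,\mathrm{d}\Pi(u)$ diverges); you instead attempt to re-derive it via the $3$-$4$-$1$ inequality.

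That last step contains a genuine flaw. You claim that the Chebyshev bound \eqref{eqChebyshev} ``controls the size of $\zeta(\sigma+2it)$,'' but Chebyshev only yields $|\zeta(\sigma+2it)|\le\zeta(\sigma)\ll 1/(\sigma-1)$. Feeding that into
$|\zeta(\sigma)|^3|\zeta(\sigma+it)|^4|\zeta(\sigma+2it)|\ge1$ together with $\zeta(\sigma)\asymp1/(\sigma-1)$ gives only $|\zeta(\sigma+it)|\gg\sigma-1$, i.e.\ $1/\zeta(s)=O\bigl(1/(\sigma-1)\bigr)$, which is not enough: Theorem~\ref{Halasz th3} requires the little-$o$ estimate. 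To upgrade to $o$, one must exploit the positive density \eqref{eqDensity}, which gives the sharper statement $\zeta(s)=a/(s-1)+o\bigl(1/(\sigma-1)\bigr)$ uniformly on compacts; for $t$ bounded away from $0$ this makes $\zeta(\sigma+2it)=o\bigl(1/(\sigma-1)\bigr)$, and only then does the $3$-$4$-$1$ inequality force $|\zeta(\sigma+it)|/(\sigma-1)\to\infty$. In addition, the uniformity as $t\to0$ is delicate and needs a separate treatment (e.g.\ splitting at $|t|\lessgtr C(\sigma)(\sigma-1)$ with $C(\sigma)\to\infty$ slowly); your two-region argument (near $0$/away from $0$ with a fixed cutoff) does not cover the intermediate range where $|t|$ is small but much larger than $\sigma-1$. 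So the roles of the two hypotheses in your sketch are essentially reversed: positive density is what keeps $\zeta(\sigma+2it)$ small, while Chebyshev is what yields the hypothesis $\int_1^x|g_1(u)|\log u\,\mathrm{d}\Pi(u)=\psi(x)\ll x$ needed to apply Theorem~\ref{Halasz th3} to $g_1\equiv-1$.
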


\section{Main result and some consequences}
\label{Section Halasz theorem}

Let us start with our definition of the analog of a multiplicative function on a non necessarily discrete generalized number system. In a broader sense \cite{beurling1937,diamond-zhangbook}, a Beurling generalized number system is merely a pair of non-decreasing right continuous functions $N$ and $\Pi$ with $N(1)=1$ and $\Pi(1)=0$, both having support in $[1,\infty)$, and subject to the relation $\mathrm{d}N=\exp^{\ast}(\mathrm{d}\Pi)$, where the exponential is taken with respect to  the (multiplicative) convolution of measures \cite{diamond-zhangbook}. Since the hypotheses used in this article always guarantee convergence of the Mellin transforms, the latter becomes equivalent to the zeta function identity  
\[
\zeta(s) =\int^{\infty}_{1^{-}} x^{-s}\mathrm{d}N(x)= \exp\left(\int^{\infty}_{1}x^{-s}\mathrm{d}\Pi(x)\right).
\]

 We shall say that a (complex-valued) measure $\mathrm{d}G$ (supported on $[1,\infty)$) is \emph{arithmetic} (w.r.t. the number system under consideration) if it is absolutely continuous with respect to $\mathrm{d}N$. Furthermore, we call it \emph{multiplicative} if it can be written as $\mathrm{d}G=\exp^{\ast}\left(g\:\mathrm{d}\Pi\right)$ for some function $g$. Clearly, every multiplicative measure is arithmetic.

We can now state the main result of this article, its proof will be postponed to Section \ref{section proof of Halasz}.

\begin{theorem}
\label{Halasz th2} Suppose the number system satisfies the upper and lower logarithmic density conditions  
\begin{equation}
\label{eqH2.1} \int_{1^{-}}^{x}\frac{\mathrm{d}N(u)}{u}\asymp \log x.
\end{equation}

Let $\mathrm{d}G=\exp^{\ast}\left(g \:\mathrm{d}\Pi\right)$ be a multiplicative arithmetic measure with $g=g_1+g_2$ such that $|g_{1}(x)|\leq 1$, the bound $\int_{1}^{x} |g_1(u)|\log u\: \mathrm{d}\Pi(u)\ll x$ holds, and $\int_{1}^{\infty}x^{-1}|g_2(x)|\mathrm{d}\Pi(x)<\infty$.
Then, for real constants $c$, $\alpha$, and a slowly varying function $L(u)$ with $|L(u)|=1$, the relation
\begin{equation}
\label{eqH2.2}
\int_{1}^{x}\frac{G(u)}{u}\:\mathrm{d}u= \frac{c x^{1+i\alpha}}{(1+i\alpha)^{2}}L(\log x) +o(x)
\end{equation}
is satisfied if and only if
\begin{equation}
\label{eqH2.3}
\widehat{G}(s)=  \frac{c}{s-1-i\alpha} L\left(\frac{1}{\sigma-1}\right)+o\left(\frac{1}{\sigma-1}\right)
\end{equation}
  holds uniformly for $t$ in compact intervals.
\end{theorem}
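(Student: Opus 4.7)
The plan is to treat the two implications of the biconditional separately. For the necessity direction \eqref{eqH2.2} $\Rightarrow$ \eqref{eqH2.3}, I begin by establishing $|G(x)|\ll x$, which follows via the factorization $\exp^{\ast}(g\,\mathrm{d}\Pi)=\exp^{\ast}(g_{1}\mathrm{d}\Pi)\ast\exp^{\ast}(g_{2}\mathrm{d}\Pi)$ from $|g_{1}|\leq 1$, the absolute convergence at $\sigma=1$ of the transform of $g_{2}\,\mathrm{d}\Pi$, and the upper bound $N(x)\ll x$ implicit in \eqref{eqH2.1}. Two integration-by-parts then yield
$$\widehat{G}(s)\;=\;s^{2}\int_{1}^{\infty} H(x)\,x^{-s-1}\,\mathrm{d}x,\qquad H(x):=\int_{1}^{x}\frac{G(u)}{u}\,\mathrm{d}u.$$
Substituting \eqref{eqH2.2} and changing variables $u=\log x$ reduces the main term to the Laplace-type integral $\int_{0}^{\infty}e^{-(s-1-i\alpha)u}L(u)\,\mathrm{d}u$, whose uniform-on-compacts asymptotic is given by the Karamata theorem for slowly varying functions; the $o(x)$ error becomes $o(1/(\sigma-1))$ uniformly in $t$ on compacts by dominated convergence.

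The sufficiency direction \eqref{eqH2.3} $\Rightarrow$ \eqref{eqH2.2} is the core content and is a Hal\'asz-type theorem in disguise. Following the Schwarz-Spilker treatment of the Daboussi-Indlekofer elementary method, the starting point is the Chebyshev-type convolution identity obtained by logarithmic differentiation of $\widehat{G}(s)=\exp(\int x^{-s}g(x)\,\mathrm{d}\Pi(x))$, namely
$$\int_{1}^{x}\log u\,\mathrm{d}G(u)\;=\;\int_{1^{-}}^{x}G(x/u)\,\mathrm{d}\Psi_{g}(u),\qquad \Psi_{g}(u):=\int_{1}^{u}\log v\cdot g(v)\,\mathrm{d}\Pi(v),$$
combined with the product-rule identity $G(x)\log x=\int_{1}^{x}\log u\,\mathrm{d}G(u)+H(x)$. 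The splitting $g=g_{1}+g_{2}$ is essential here: since $\int x^{-1}|g_{2}|\,\mathrm{d}\Pi<\infty$, the $g_{2}$ contribution factors off as an absolutely convergent multiplicative perturbation that can be absorbed into the slowly varying factor and handled separately, while $\Psi_{g_{1}}(x)\ll x$ is precisely the Chebyshev input the Daboussi-Indlekofer machinery requires.

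With these tools in place, I would use \eqref{eqH2.1} to secure a quadratic mean bound $\int_{1}^{x}|G(u)|^{2}u^{-1}\,\mathrm{d}u\ll x$ via Plancherel on the multiplicative group (the lower bound in \eqref{eqH2.1} being essential to compare $|\widehat{G}|^{2}$ to the density of $\mathrm{d}N$), then invert the convolution identity in Mellin-Plancherel form to isolate the singular contribution of $\widehat{G}$ at $s=1+i\alpha$; a second application of Karamata in its Tauberian guise transports $L(1/(\sigma-1))$ back to $L(\log x)$. The principal obstacle I anticipate is the uniform control of the slowly varying factor under the Plancherel inversion: truncation errors must be genuinely $o(x)$ rather than $o(xL(\log x))$ or $o(x\log x)$, which forces the full two-sided condition \eqref{eqH2.1} and a delicate simultaneous tracking of the oscillation $x^{i\alpha}$ and the factor $L$. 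A secondary technical point is that all manipulations must remain at the level of measures, since pointwise multiplicativity on individual generalized integers is not available in the general Beurling framework considered here.
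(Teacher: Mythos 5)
Your treatment of the direction \eqref{eqH2.2} $\Rightarrow$ \eqref{eqH2.3} (double integration by parts to $\widehat{G}(s)=s^{2}\int_{1}^{\infty}H(x)x^{-s-1}\mathrm{d}x$, then Karamata) is sound; the paper simply declares this direction trivial and focuses on the converse. It is in the converse direction that the proposal has genuine problems.

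First, the claimed quadratic mean bound $\int_{1}^{x}|G(u)|^{2}u^{-1}\,\mathrm{d}u\ll x$ is false under the hypotheses. Take $g\equiv 1$, so $\mathrm{d}G=\mathrm{d}N$; by the upper bound in \eqref{eqH2.1} (or more simply by Chebyshev-type bounds) one has $G(u)\asymp u$ on a set of full logarithmic density, which makes $\int_{1}^{x}|G(u)|^{2}u^{-1}\mathrm{d}u\asymp x^{2}$, not $\ll x$. What Plancherel actually delivers from $G(x)\ll x$ is the bound $\int_{\Re e\:s=\sigma}|\widehat{G}(s)/s|^{2}|\mathrm{d}s|\ll 1/(\sigma-1)$ (and its refined form with the $\exp(2\int(|g|-1)/u\,\mathrm{d}\Pi)$ weight); this is Corollary \ref{lemma Rankin}, and it lives on the vertical line, not on the $u$-axis. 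The distinction matters because the subsequent argument needs an $L^{2}$ bound on $\widehat{G}$ that can be localized to subintervals of the critical line $\Re e\:s=\sigma_x$, which is incompatible with a false horizontal-axis bound.

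Second, the asserted essentiality of the lower bound in \eqref{eqH2.1} to ``compare $|\widehat{G}|^{2}$ to the density of $\mathrm{d}N$'' is exactly backwards. The paper's Remark at the end of Section \ref{section proof of Halasz} notes that the proof uses \emph{only} the upper bound $\int_{1}^{x}u^{-1}\mathrm{d}\Pi(u)\leq\log\log x+O(1)$; the lower bound is needed nowhere in this argument. Its only role is via Lemma \ref{lemma weak Mertens} to make \eqref{eqH2.1} a clean, symmetric-looking hypothesis.

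Third, and most substantively, the passage from the convolution identity and the Plancherel bound to the conclusion \eqref{eqH2.2} is left entirely schematic. The paper's actual mechanism is: (a) translate \eqref{eqH2.2} into an asymptotic for $F(x)=\int_{1}^{x}\bigl(\int_{1}^{u}\log y\,\mathrm{d}G(y)\bigr)\mathrm{d}u/u$ via Lemma \ref{Halasz l 1}, using $G(x)\ll x$ from Corollary \ref{lemma Rankin}; (b) apply Perron inversion $\frac{F(x)}{x}=-\frac{1}{2\pi i}\int_{\Re e\:s=\sigma_x}\frac{x^{s-1}\widehat{G}'(s)}{s^{2}}\mathrm{d}s$; (c) split the line $\Re e\:s=\sigma_x$ at heights $\lambda/\log x$ and $\lambda$, extract the main term $cL(\log x)\log x$ from $\Gamma_0$ via \eqref{eqH2.3}; (d) control $\Gamma_1$ and $\Gamma_2$ by Cauchy--Schwarz with the weight $|\widehat{G}'(s)/(s\widehat{G}(s))|$ (bounded in $L^2$ by \eqref{Rankin eq 5}), the shift trick of \eqref{Rankin eq 4} for the tail $\Gamma_2$, and on $\Gamma_1$ a pointwise smallness of $\widehat{G}$ from \eqref{eqH2.3} combined with an $L^{2}$ bound on $(\widehat{G}(s))^{3/4}$. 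Steps (c) and (d), especially the three-way contour split and the $(\widehat{G})^{3/4}$-trick that makes the $\Gamma_1$ integral convergent, are the heart of the argument, and they are absent from your sketch. Without them it is not clear how to convert the hypothesis \eqref{eqH2.3} (an average-in-$t$ statement near $s=1+i\alpha$) into the real-variable asymptotic \eqref{eqH2.2}.
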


The asymptotic relation \eqref{eqH2.2} could be differentiated via elementary familiar arguments (e.g. \cite[Section I.18, p.~37]{korevaarbook}) if $G$ satisfies additional Tauberian hypotheses. For example, if $g(u)\geq 0$, so that $G$ is non-decreasing, we must essentially have $\alpha=0$ and $L(u)=1$ in \eqref{eqH2.2}; one then deduces $G(x)\sim cx$. We might apply this to $G(x)=N(x)$ itself; the next corollary clarifies even more why the hypotheses on $N$ in Theorem \ref{Halasz th1} are redundant.

\begin{corollary}
\label{Halasz c 1} Assume that the Chebyshev upper bound \eqref{eqChebyshev} holds. Then, for $a>0$, $N(x)\sim ax$ holds if and only if 
\begin{equation}
\label{eqH2.4.1}  
\int_{1}^{\infty} \frac{|N(x)-ax|}{x^{\sigma+1}}\:
\mathrm{d}x =o\left(\frac{1} {\sigma-1}\right).
\end{equation}
These relations are also equivalent to 
\begin{equation}
\label{eqH2.4}\zeta(s)=\frac{a}{s-1}+ o\left(\frac{1}{\sigma-1}\right),
\end{equation}
uniformly for $t$ on compact intervals.
\end{corollary}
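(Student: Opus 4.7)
The plan is to apply Theorem \ref{Halasz th2} to the multiplicative measure $\mathrm{d}G=\mathrm{d}N$, i.e.\ $g\equiv 1$, split as $g_{1}=1$, $g_{2}=0$. The hypothesis $\int_{1}^{x}|g_{1}(u)|\log u\,\mathrm{d}\Pi(u)\ll x$ is then exactly the Chebyshev bound \eqref{eqChebyshev}, and taking $c=a$, $\alpha=0$, $L\equiv 1$, Theorem \ref{Halasz th2} supplies the equivalence
\[
\int_{1}^{x}\frac{N(u)}{u}\,\mathrm{d}u=ax+o(x)\quad\Longleftrightarrow\quad \zeta(s)=\frac{a}{s-1}+o\!\left(\frac{1}{\sigma-1}\right)
\]
(the right-hand side uniformly for $t$ in compact intervals), which carries the bulk of the work.

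I would then close a three-implication cycle. (a) $N(x)\sim ax$ yields \eqref{eqH2.4.1} by cutting at $T$ large enough that $|N(x)-ax|<\varepsilon x$ for $x>T$: the tail contributes at most $\varepsilon/(\sigma-1)$, while the initial segment is $O(\log T)=o(1/(\sigma-1))$. (b) \eqref{eqH2.4.1}$\Rightarrow$\eqref{eqH2.4} follows from the integration-by-parts identity
\[
\zeta(s)-\frac{a}{s-1}=a+s\int_{1}^{\infty}(N(u)-au)\,u^{-s-1}\,\mathrm{d}u\qquad(\sigma>1),
\]
since $|s|$ remains bounded as $t$ varies in a compact set. (c) \eqref{eqH2.4}$\Rightarrow N(x)\sim ax$: Theorem \ref{Halasz th2} delivers the integrated form $\int_{1}^{x}N(u)/u\,\mathrm{d}u=ax+o(x)$, and monotonicity of $N$ then yields $N(x)\sim ax$ through the usual squeeze $N(x)\log(1+h)\leq\int_{x}^{x(1+h)}N(u)u^{-1}\,\mathrm{d}u=ahx+o(x)$, its lower-sided counterpart, and letting $h\to 0^{+}$.

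The only step requiring genuine care is verifying the two-sided log-density hypothesis \eqref{eqH2.1} needed to run (c). The upper bound $\int_{1^{-}}^{x}\mathrm{d}N(u)/u\ll\log x$ follows from the well-known consequence $N(x)\ll x$ of \eqref{eqChebyshev}. For the lower bound I would appeal to Karamata's Tauberian theorem applied to the positive measure $u^{-1}\,\mathrm{d}N(u)$: writing $K(v)=\int_{1^{-}}^{e^{v}}u^{-1}\,\mathrm{d}N(u)$, the hypothesis \eqref{eqH2.4} specialized to $t=0$ reads $\tau\int_{0}^{\infty}e^{-\tau v}\,\mathrm{d}K(v)\to a$ as $\tau=\sigma-1\to 0^{+}$, and since $K$ is non-decreasing Karamata delivers $K(v)\sim av$, which is precisely $\int_{1^{-}}^{x}\mathrm{d}N(u)/u\sim a\log x$, as required.
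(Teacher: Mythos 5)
Your proof is correct and follows essentially the same route as the paper: verify the trivial implications $N(x)\sim ax \Rightarrow \eqref{eqH2.4.1} \Rightarrow \eqref{eqH2.4}$, deduce the logarithmic density hypothesis \eqref{eqH2.1} from \eqref{eqH2.4} via the Hardy--Littlewood--Karamata theorem (your separate derivation of the upper bound from $N(x)\ll x$ is harmless but redundant, as Karamata already yields the two-sided asymptotic), then apply Theorem~\ref{Halasz th2} to $\mathrm{d}G=\mathrm{d}N$ and finish by the standard monotonicity squeeze.
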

\begin{proof} The implications $N(x)\sim a x$ $\Rightarrow$ \eqref{eqH2.4.1} $\Rightarrow$ \eqref{eqH2.4} trivially hold unconditionally. Assume now that \eqref{eqH2.4} holds, then in particular $\zeta(\sigma)\sim a/(\sigma-1)$ and the Hardy-Littlewood-Karamata theorem yields logarithmic density, 
\begin{equation}
\label{log-density}
\int_{1^{-}}^{x}\frac{\mathrm{d}N(u)}{u}\sim a\log x.
\end{equation}
By Theorem \ref{Halasz th2}, we have
$
\int_{1}^{x}N(u)/u\: \mathrm{d}u\sim ax$. As explained above, using that $N$ is non-decreasing one concludes that $N(x)\sim ax$, as required.
\end{proof}

Furthermore,

\begin{theorem}
\label{Halasz th3} Assume $N$ has positive density  \eqref{eqDensity}. Let $\mathrm{d}G=\exp^{\ast}\left(g \:\mathrm{d}\Pi\right)$ be a multiplicative arithmetic measure with $g=g_1+g_2$ such that $|g_{1}(x)|\leq 1$, the bound $\int_{1}^{x} |g_1(u)|\log u\: \mathrm{d}\Pi(u)\ll x$ holds, and $\int_{1}^{\infty}x^{-1}|g_2(x)|\mathrm{d}\Pi(x)<\infty$.
Then, for real constants $c$, $\alpha$, and a slowly varying function $L(u)$ with $|L(u)|=1$, the asymptotic relation
\begin{equation}
\label{eqH2.5}
G(x)= \frac{c x^{1+i\alpha}}{1+i\alpha}L(\log x) +o(x)
\end{equation}
is satisfied if and only if \eqref{eqH2.3} holds uniformly for $t$ in compact intervals.
\end{theorem}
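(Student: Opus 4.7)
My plan is to reduce Theorem \ref{Halasz th3} to Theorem \ref{Halasz th2} via a Tauberian argument. Indeed, $N(x)\sim ax$ implies by partial summation that $\int_{1^{-}}^{x} u^{-1}\,\mathrm{d}N(u) \sim a\log x$, so the two-sided bound \eqref{eqH2.1} holds and Theorem \ref{Halasz th2} applies. Consequently \eqref{eqH2.3} is already equivalent to the logarithmic-average statement \eqref{eqH2.2}, and only the equivalence of \eqref{eqH2.5} with \eqref{eqH2.2} remains to be shown.

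The implication $\eqref{eqH2.5}\Rightarrow\eqref{eqH2.2}$ is routine: substituting the asymptotic for $G(u)$ into the logarithmic integral and changing variables $v=\log u$ reduces matters to the standard estimate $\int_{0}^{T} e^{(1+i\alpha)v} L(v)\,\mathrm{d}v \sim e^{(1+i\alpha)T}L(T)/(1+i\alpha)$ as $T\to\infty$, which holds because the mass of the exponentially growing integrand concentrates in a window of bounded length near $v=T$, where $L(v)\sim L(T)$ by slow variation.

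For the converse, I would first establish the Tauberian regularity estimate
\begin{equation*}
|G(x(1+h)) - G(x)| \leq (Ch + o(1))\,x \quad \text{as } x\to\infty,\ \text{for any small fixed } h>0.
\end{equation*}
Factor $\mathrm{d}G = \mathrm{d}G_1 \ast \mathrm{d}G_2$ with $\mathrm{d}G_j = \exp^{\ast}(g_j\,\mathrm{d}\Pi)$. The bound $|g_1|\leq 1$ combined with the triangle inequality for the convolution exponential gives $|\mathrm{d}G_1|\leq \exp^{\ast}(|g_1|\mathrm{d}\Pi)\leq \mathrm{d}N$, and the hypothesis $\int x^{-1}|g_2|\,\mathrm{d}\Pi < \infty$ yields, through the Mellin transform at $s=1$, that $K := \int x^{-1}\,\mathrm{d}|\mathrm{d}G_2|(x) < \infty$. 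Hence
\begin{equation*}
|G(x(1+h)) - G(x)| \leq \int \bigl(N(x(1+h)/y) - N(x/y)\bigr)\,\mathrm{d}|\mathrm{d}G_2|(y),
\end{equation*}
and splitting at $y = x/M$ with $M$ large, using $N(u(1+h)) - N(u) = ahu + o(u)$ uniformly for $u\geq M$ on the main range while controlling the complementary range by the tail bound $\int_{y>x/M} y^{-1}\,\mathrm{d}|\mathrm{d}G_2|(y) \to 0$, produces the estimate with $C=aK$.

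With this regularity in hand, set $F(x) = \int_{1}^{x} G(u)/u\,\mathrm{d}u$, so that
\begin{equation*}
\frac{F(x(1+h))-F(x)}{hx} = \frac{G(x)\log(1+h)}{hx} + O\!\left(\frac{\sup_{u\in[x,x(1+h)]}|G(u)-G(x)|}{x}\right).
\end{equation*}
Inserting \eqref{eqH2.2} on the left and using slow variation of $L$ (so $L(\log x+\log(1+h))/L(\log x)\to 1$ for fixed $h$ as $x\to\infty$) yields the value $\frac{c\,x^{i\alpha}L(\log x)}{(1+i\alpha)^{2}} \cdot \frac{(1+h)^{1+i\alpha}-1}{h} + o(1)$; applying the regularity bound, the $O$-term is $O(h)+o_{x}(1)$. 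Letting $h\to 0^{+}$ after $x\to\infty$ and using $((1+h)^{1+i\alpha}-1)/h \to 1+i\alpha$ then gives $G(x)/x - c\,x^{i\alpha}L(\log x)/(1+i\alpha) = o(1)$, which is \eqref{eqH2.5}. The main technical obstacle is the regularity estimate itself: $|\mathrm{d}G_2|$ need not be a finite measure (only its $1/y$-weighted version is), so careful truncation of the convolution $\mathrm{d}N\ast|\mathrm{d}G_2|$ is needed to convert $N(x)\sim ax$ into a modulus of continuity that is genuinely linear in $h$.
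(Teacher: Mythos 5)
Your argument is correct, and it reaches the same endpoint as the paper by a slightly different route. Both proofs run through Theorem~\ref{Halasz th2} and an elementary Tauberian differentiation step based on slow oscillation; the difference is where the factor $\mathrm{d}G_2$ is handled. The paper first passes to $\mathrm{d}G_1$: it shows that convolving with the inverse $\mathrm{d}F=\exp^*(-g_2\,\mathrm{d}\Pi)$ turns \eqref{eqH2.3} for $\widehat{G}$ into the analogous statement for $\widehat{G_1}$ (using that $(\widehat{F}(s)-\widehat{F}(1+i\alpha))/(s-1-i\alpha)=o(1/(\sigma-1))$ since $\widehat{F}$ converges absolutely on $\Re s=1$), applies Theorem~\ref{Halasz th2} to $G_1$, proves slow oscillation of $G_1(x)/x$ directly from $|\mathrm{d}G_1|\leq\mathrm{d}N$ and $N(x)\sim ax$, differentiates, and finally transfers back to $G$ via Lemma~\ref{Wintner lemma}(i). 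You instead apply Theorem~\ref{Halasz th2} to $G$ outright, and then prove the modulus-of-continuity bound for $G$ itself, absorbing the $\mathrm{d}G_2$ factor into the regularity estimate via the truncation of $\int(N(x(1+h)/y)-N(x/y))\,|\mathrm{d}G_2|(y)$. This trades the paper's Mellin-side manipulation of $\widehat{F}$ and the Wintner transfer for a somewhat more involved slow-oscillation verification; the core Tauberian mechanism (logarithmic mean plus Schmidt slow oscillation yields the pointwise asymptotic, after using slow variation of $L$) is the same in both. One small clean-up worth making: the phrase ``$N(u(1+h))-N(u)=ahu+o(u)$ uniformly for $u\geq M$'' should be stated as an $\epsilon$--$M$ pair (for every $\epsilon>0$ there is $M$ with $|N(u(1+h))-N(u)-ahu|\leq\epsilon u$ for $u\geq M$), after which the $\limsup$ in $x$ followed by $\epsilon\to 0$ gives exactly $\limsup_{x\to\infty}|G(x(1+h))-G(x)|/x\leq aKh$, as you intend.
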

\begin{proof} 
The non-trivial implication is \eqref{eqH2.3} implies \eqref{eqH2.5}. Set $\mathrm{d}G_{i}=\exp^{\ast}\left( g_i \:\mathrm{d}\Pi\right)$ and in addition consider the convolution inverse of $\mathrm{d}G_{2}$, that is,  $\mathrm{d}F= \exp^{\ast}\left( -g_2 \:\mathrm{d}\Pi\right) $. Since $\widehat{F}(s)$ is absolutely convergent on $\Re e\: s=1$, a small computation shows that $(\widehat{F}(s)-\widehat{F}(1+i\alpha))/(s-1-i\alpha)=o(1/(\sigma-1))$ uniformly for $t$ on compacts. Thus, with the same uniformity, \eqref{eqH2.3} yields
$$
\widehat{G_{1}}(s)= \widehat{G}(s)\widehat{F}(s)= \frac{c_1}{s-1-i\alpha} L\left(\frac{1}{\sigma-1}\right)+o\left(\frac{1}{\sigma-1}\right),
$$
where $c_1=c/\widehat{G}_2(1+i\alpha)$. Applying Theorem \ref{Halasz th2} to $\mathrm{d}G_{1}$, we obtain
\begin{equation*}
\int_{1}^{x}\frac{G_1(u)}{u}\:\mathrm{d}u= \frac{c_1 x^{1+i\alpha}}{(1+i\alpha)^{2}}L(\log x) +o(x).
\end{equation*}
 Let us verify that $G_1(x)/x$ is slowly oscillating (in the sense of Schmidt, cf. \cite[Def.~I.16.1, p.~32]{korevaarbook}). 
 Due to our hypothesis on $G_1$ it is clear that $|\mathrm{d}G_1|\leq \mathrm{d}N$. Hence, if $\eta>1$,  
$$
\left|\frac{G_{1}(\eta x)}{\eta x}- \frac{G_{1}(x)}{ x}\right| \leq  \left(\eta-1\right) \left|\frac{G_{1}(\eta x)}{\eta x}\right| + \frac{N(\eta x)-N(x)}{ x}\ll \eta-1 + o_{\eta}(1),
$$
by \eqref{eqDensity}.  Since $L(\log x)$ is slowly varying, a standard elementary Tauberian argument gives
$$
G_1(x)= \frac{c_1 x^{1+i\alpha}}{1+i\alpha}L(\log x) +o(x).
$$
The asymptotic formula \eqref{eqH2.2} then follows from a variant of Wintner's mean-value theorem  (i.e., Lemma \ref{Wintner lemma}(i) below).
\end{proof}

Combining Theorem \ref{Halasz th3} with \cite[Lemma~3.6]{D-D-V2018}, we immediately obtain Corollary \ref{Halasz c 2} with $\mathrm{d}M$ the convolution inverse of $\mathrm{d}N$, namely, the measure $\mathrm{d}M=\exp^{\ast}\left(-\mathrm{d}\Pi\right)$ (for discrete number systems $M$ is then the sum function of the Beurling analog of the M\"{o}bius function). It is worth pointing out that the hypothesis $N(x)\sim ax$ cannot be omitted in Corollary \ref{Halasz c 2}, as shown by \cite[Examples 4.2 and 4.3]{D-D-V2018}. We also mention that one can construct examples of number systems for which $M(x)=o(x)$ and  \eqref{eqDensity}  hold for some $a>0$, but for which the Chebyshev bound \eqref{eqChebyshev} fails; see for instance Kahane's example \cite[Example 3.6]{D-VL1th}.

The ensuing version of the Hal\'{a}sz mean-value theorem holds true.

\begin{theorem}
\label{Halasz th4} Assume the positive density condition  \eqref{eqDensity} 
and the Chebyshev upper bound \eqref{eqChebyshev} and let $\mathrm{d}G=\exp^{\ast}\left(g \:\mathrm{d}\Pi\right)$ be a multiplicative arithmetic measure such that $g=g_1+g_2$ with $|g_{1}(x)|\leq 1$ and $\int_{1}^{\infty}x^{-1}|g_2(x)|\mathrm{d}\Pi(x)<\infty$.

If there is $\alpha\in\mathbb{R}$ such that 
\begin{equation}
\label{eqH2.6} \int_{1}^{\infty} \frac{1-\Re e\: (g(x)x^{-i\alpha})}{x} \: \mathrm{d}\Pi(x)
\end{equation}
converges, then 
\begin{equation}
\label{eqH2.7} G(x)= \frac{ x^{1+i\alpha}}{1+i\alpha}\exp\left(-\int_{1}^{x} \frac{1-g(u)u^{-i\alpha}}{u} \: \mathrm{d}\Pi(u)\right)
 +o(x).\end{equation}
 
 Otherwise, if there is no such $\alpha$, then $G$ has zero mean-value,
\begin{equation}
\label{eqH2.8} G(x)=o(x).
\end{equation} 
  
In either case, there are real constants $c$, $\alpha$, and a slowly varying function $L(u)$ with $|L(u)|=1$ such that 
\eqref{eqH2.5} holds.
\end{theorem}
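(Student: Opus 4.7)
The plan is to verify the Mellin transform asymptotic \eqref{eqH2.3} in both alternatives, with suitable constants $c$, $\alpha$ and a slowly varying $L$, and then invoke Theorem \ref{Halasz th3} to deliver \eqref{eqH2.5}, finally pinning down the constants to recover \eqref{eqH2.7} or \eqref{eqH2.8}. Writing $\mathrm{d}G=\mathrm{d}G_{1}\ast \mathrm{d}G_{2}$ with $\mathrm{d}G_{i}=\exp^{\ast}(g_{i}\,\mathrm{d}\Pi)$, the hypothesis $\int_{1}^{\infty}|g_{2}(x)|/x\,\mathrm{d}\Pi(x)<\infty$ makes $\widehat{G_{2}}(s)=\exp\!\left(\int_{1}^{\infty}x^{-s}g_{2}\,\mathrm{d}\Pi\right)$ continuous and nonvanishing on $\Re e\: s\geq 1$, so it suffices to analyze $\widehat{G_{1}}(s)$. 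Note that $|g_{1}|\leq 1$ combined with the Chebyshev bound $\psi(x)\ll x$ automatically yields $\int_{1}^{x}|g_{1}(u)|\log u\,\mathrm{d}\Pi(u)\ll x$, so Theorem \ref{Halasz th3} is applicable to $\mathrm{d}G_{1}$.

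The central identity, valid since $|g_1|\leq 1$, is
\[
|\widehat{G_{1}}(\sigma+it)|=\zeta(\sigma)\exp(-I(\sigma,t)),\quad I(\sigma,t):=\int_{1}^{\infty}\frac{1-\Re e\:(g_{1}(x)x^{-it})}{x^{\sigma}}\,\mathrm{d}\Pi(x)\geq 0,
\]
which together with $\zeta(\sigma)\sim a/(\sigma-1)$ from Corollary \ref{Halasz c 1} drives the argument. A Fatou/compactness argument on the family $\{I(\sigma,\cdot)\}$ yields the dichotomy: either (i) there exists $\alpha\in\mathbb{R}$ with $\int_{1}^{\infty}(1-\Re e\:(g_{1}(x)x^{-i\alpha}))/x\,\mathrm{d}\Pi(x)<\infty$, or (ii) $I(\sigma,t)\to\infty$ as $\sigma\to 1^+$ uniformly for $t$ in any compact interval. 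Since the $g_{2}$-piece of \eqref{eqH2.6} is absolutely bounded, this dichotomy precisely matches the two alternatives in the statement.

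In case (ii), $\widehat{G}(s)=o(1/(\sigma-1))$ uniformly on compact $t$-intervals, which is \eqref{eqH2.3} with $c=0$, and Theorem \ref{Halasz th3} yields \eqref{eqH2.8}. In case (i), let $h(x)=g_{1}(x)x^{-i\alpha}$ and write
\[
\log\widehat{G_{1}}(\sigma+i\alpha)=\log\zeta(\sigma+i\alpha)-\int_{1}^{\infty}x^{-\sigma}(1-h(x))\,\mathrm{d}\Pi(x).
\]
The real part of the subtracted integral converges as $\sigma\to 1^+$ by monotone convergence; its imaginary part need not converge, but fluctuates only slowly: Cauchy--Schwarz together with the bound $|\Im m\: h|^{2}\leq 2(1-\Re e\: h)$ (valid since $|h|\leq 1$) and the Chebyshev-driven estimate $\int_{x}^{x^{\lambda}}\mathrm{d}\Pi(u)/u\ll \lambda-1$ imply that the increment of the phase over $[x,x^{\lambda}]$ tends to $0$, producing a slowly varying $L(\log x)$ with $|L|=1$ that is a unimodular constant multiple of $\exp\!\left(-i\,\Im m\int_{1}^{x}(1-h(u))/u\,\mathrm{d}\Pi(u)\right)$. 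An analogous Fatou-type argument shows $I(\sigma,t)\to\infty$ for $t\neq\alpha$ in any compact interval, so $\widehat{G}(s)=o(1/(\sigma-1))$ off the peak. Hence \eqref{eqH2.3} holds with the identified $c,\alpha,L$; Theorem \ref{Halasz th3} delivers \eqref{eqH2.5}, and folding the $g_{2}$-contribution back into the exponential identifies $cL(\log x)$ with $\exp\!\left(-\int_{1}^{x}(1-g(u)u^{-i\alpha})/u\,\mathrm{d}\Pi(u)\right)$, which is precisely \eqref{eqH2.7}.

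The main technical obstacles are: (a) the uniform divergence of $I(\sigma,t)$ on compact $t$-intervals in case (ii) and away from $\alpha$ in case (i), which demands a careful Fatou-type compactness argument (and, for the latter, a uniqueness statement for $\alpha$); and (b) establishing the slow oscillation of the imaginary phase in case (i), where the Chebyshev bound \eqref{eqChebyshev} enters essentially through the estimate $\int_{x}^{x^{\lambda}}\mathrm{d}\Pi(u)/u\ll \lambda-1$.
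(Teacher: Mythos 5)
Your proposal follows essentially the same route as the paper: both reduce to the case $g_2=0$ (you factor off $\widehat{G_2}$; the paper invokes Lemma \ref{Wintner lemma}), both use the dichotomy based on convergence or divergence of \eqref{eqH2.6}, both handle the divergent case by a Dini-type compactness argument giving $\widehat{G}(s)=o(1/(\sigma-1))$ locally uniformly and then invoke Theorem \ref{Halasz th3} with $c=0$, and both handle the convergent case by analyzing $\log\widehat{G_1}$ near the peak $t=\alpha$ to extract the constants $c,\alpha$ and the slowly varying unimodular $L$ from the modulus and the slowly oscillating phase, again feeding the resulting form of \eqref{eqH2.3} into Theorem \ref{Halasz th3}. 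Your inequality $|\Im m\, h|^2\leq 2(1-\Re e\, h)$ together with the Chebyshev-driven bound $\int_x^{x^\lambda}u^{-1}\mathrm{d}\Pi(u)\ll \lambda-1+o(1)$ is precisely the content the paper delegates to \cite[Lemma~6.8]{ElliottBookI}.

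One place your sketch is thinner than the paper's (admittedly also sketchy) account: you establish the asymptotics of $\widehat{G_1}(\sigma+i\alpha)$ exactly at $t=\alpha$ and the $o(1/(\sigma-1))$ estimate for $t$ bounded away from $\alpha$, and then conclude that \eqref{eqH2.3} ``holds with the identified $c,\alpha,L$.'' But \eqref{eqH2.3} is asserted uniformly on compact $t$-intervals, including the regime $0<|t-\alpha|=O(\sigma-1)$, where the main term $c\,L/(s-1-i\alpha)$ still has size $\asymp 1/(\sigma-1)$ and the simple split ``at the peak / off the peak'' says nothing. Controlling $\widehat{G_1}(\sigma+it)$ there requires comparing $\int x^{-\sigma-it}(1-h(x))\,\mathrm{d}\Pi(x)$ with its value at $t=\alpha$ and using that $\zeta(s)\sim a/(s-1)$ holds locally uniformly (this is exactly the role of \eqref{eqH2.4}, cited via \cite[Lemma~6.9, pp.~245--246]{ElliottBookI} in the paper). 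You should supply this intermediate-window argument, or at least flag it, to close the gap.
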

\begin{proof} Using again Lemma \ref{Wintner lemma}(i), we may assume that $g_2=0$.
The result can then be deduced from Theorem \ref{Halasz th3} along the same lines of the proof of the corresponding Hal\'{a}sz mean-value theorem for the natural numbers given e.g. in Elliott's book \cite[Chapter 6]{ElliottBookI}. Therefore, we only give a brief sketch and leave most details to the reader. When \eqref{eqH2.6} diverges for every $\alpha$, the classical argument involving Dini's theorem yields $\widehat{G}(s)=o(1/(\sigma-1))$ (see e.g. \cite[Lemma~3.1]{zhang2018}, one just uses here $\zeta(\sigma)\ll 1/(\sigma-1)$), so that we obtain \eqref{eqH2.8} via Theorem \ref{Halasz th3} with $c=0$. 

In the case of convergence of \eqref{eqH2.6}, one may assume $\alpha=0$, because a simple integration by parts computation then yields the general result. We note that an adapted version of \cite[Lemma~6.8, p.~242]{ElliottBookI} holds in view of $\psi(x)\ll x$, while  \cite[Lemma~6.9, p.~243]{ElliottBookI} is valid because of  $N(x)\sim ax$ in the form \eqref{eqH2.4}. Hence, similarly as in \cite[pp.~245--246]{ElliottBookI}, one derives that \eqref{eqH2.3} holds for $t$ on compacts with $\alpha=0$ and a slowly varying function $L$ with modulus 1 satisfying 
$$
c L(\log x)=\exp\left( \int_{1}^{\infty}\frac{g(u)-1}{u} \exp\left(-\frac{\log u}{\log x}\right)\: \mathrm{d}\Pi(u)\right)+o(1).
$$
By Theorem \ref{Halasz th3}, it thus just remains to verify that the latter integral expression equals
$$
\int_{1}^{x} \frac{g(u)-1}{u}\: \mathrm{d}\Pi(u)+o(1).
$$
But this can also be established reasoning as in \cite[pp.~246--247]{ElliottBookI} with the aid of $\psi(x)\ll x$ and the simple bound
$$
\int_{1}^{x} \frac{\log u}{u}\:\mathrm{d}\Pi(u)= \frac{\psi(x)}{x}+\int_{1}^{x}\frac{\psi(u)}{u^2}\:\mathrm{d}u\ll \log x.
$$

\end{proof}

As a simple corollary, one also obtains Wirsing's mean-value theorem in this context. Of course, Corollary \ref{Halasz c 2} is also a consequence of it.

\begin{corollary}\label{Halasz c3} Suppose the positive density condition  \eqref{eqDensity} 
and the Chebyshev upper bound \eqref{eqChebyshev} hold. Let $\mathrm{d}G=\exp^{\ast}\left(g \:\mathrm{d}\Pi\right)$ be a real-valued multiplicative arithmetic measure such that $g=g_1+g_2$ with $|g_{1}(x)|\leq 1$ and $\int_{1}^{\infty}x^{-1}|g_2(x)|\mathrm{d}\Pi(x)<\infty$. Then,
\[
\lim_{x\to\infty} \frac{G(x)}{x}= \exp\left(-\int_{1}^{\infty} \frac{1-g(x)}{x} \: \mathrm{d}\Pi(x)\right),
\]
where the right-hand side is taken as zero when the integral diverges.
\end{corollary}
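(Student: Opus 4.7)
The plan is to deduce this directly from Theorem \ref{Halasz th4} by splitting on whether $\int_{1}^{\infty}(1-g(u))u^{-1}\,\mathrm{d}\Pi(u)$ converges or diverges. Since $g$ is real, the integrand in \eqref{eqH2.6} at $\alpha=0$ is precisely $(1-g(x))/x$; moreover $|g_{1}|\le 1$ forces $1-g_{1}\ge 0$ and $g_{2}/u$ is absolutely $\mathrm{d}\Pi$-integrable, so the integral takes a well-defined value in $(-\infty,\infty]$ and the dichotomy in the statement is meaningful.

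When the integral converges, $\alpha=0$ satisfies the hypothesis of Theorem \ref{Halasz th4}, and \eqref{eqH2.7} reads
\[
G(x)=x\exp\!\left(-\int_{1}^{x}\frac{1-g(u)}{u}\,\mathrm{d}\Pi(u)\right)+o(x).
\]
Dividing by $x$ and passing to the limit produces the required value $\exp(-\int_{1}^{\infty}(1-g(u))/u\,\mathrm{d}\Pi(u))$.

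When the integral diverges, the target is $G(x)=o(x)$. If \eqref{eqH2.6} diverges for \emph{every} $\alpha$, this is exactly \eqref{eqH2.8}. The delicate point is to exclude the remaining possibility, namely that some $\alpha\ne 0$ satisfies \eqref{eqH2.6}. In that event the ``in either case'' clause of Theorem \ref{Halasz th4} supplies real constants $c,\alpha$ (with $c\ne 0$, because the right-hand side of \eqref{eqH2.7} has modulus of exact order $x$) and a slowly varying $L$ with $|L|=1$ such that $G(x)=cx^{1+i\alpha}L(\log x)/(1+i\alpha)+o(x)$. Writing this as $G(x)=Kx\,e^{i\theta(x)}+o(x)$ with $K=|c|/\sqrt{1+\alpha^{2}}>0$ and $\theta(x)=\alpha\log x+\arg L(\log x)+\arg(c/(1+i\alpha))$, the real-valuedness of $G$ forces $\sin\theta(x)=o(1)$. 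Comparing this at $x$ and at $\lambda x$ for any fixed $\lambda>0$ and invoking the uniform convergence theorem for slowly varying functions, which yields $\arg L(\log x+\log\lambda)-\arg L(\log x)\to 0$, one obtains $\alpha\log\lambda\in\pi\mathbb{Z}+o(1)$; since this must hold for every $\lambda>0$, necessarily $\alpha=0$, a contradiction.

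The principal obstacle is precisely this last step: turning the oscillatory asymptotic $\alpha\ne 0$ into a contradiction with $G$ being real-valued. The scaling argument above is the standard device and parallels the classical derivation of Wirsing's theorem from Hal\'{a}sz's theorem over the natural numbers; modulo that point, the corollary is an immediate consequence of Theorem \ref{Halasz th4} together with the convergence of the exponential factor.
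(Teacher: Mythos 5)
Your proposal is correct and follows essentially the same route as the paper's proof: split on whether \eqref{eqH2.6} converges at $\alpha=0$, invoke Theorem \ref{Halasz th4} in the convergent case, and in the troublesome sub-case (convergence at some $\alpha\neq 0$) use real-valuedness of $G$ together with a scaling comparison $x\mapsto\lambda x$ and the slow variation of $L$ to derive a contradiction. The paper does the scaling step more economically, fixing $\lambda=e^{\pi/(2\alpha)}$ and observing directly that $G(\lambda x)/G(x)\to ie^{\pi/(2\alpha)}\notin\mathbb{R}$, whereas you establish $\sin\theta(x)=o(1)$ and run over all $\lambda>0$ to force $\alpha\log\lambda\in\pi\mathbb{Z}$; these are two presentations of the same idea.
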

\begin{proof} Indeed, the convergent case directly follows from Theorem \ref{Halasz th4}. Assume thus that \eqref{eqH2.6} diverges for $\alpha=0$. If it also diverges for all other values of $\alpha$, we are done as well since \eqref{eqH2.8} holds. If \eqref{eqH2.6} converges for some $\alpha\neq 0$, then \eqref{eqH2.5} holds for some $c\in \mathbb{R}$ and $L$. We need to show that necessarily $c=0$. If $c$ were not zero, we would have
$$\lim_{x\to\infty}\frac{G(x e^{\frac{\pi}{2\alpha}})}{G(x)}=ie^{\frac{\pi}{2\alpha}};$$ but this limit must be real so that one must either have $c=0$ or that such an $\alpha$ does not exist. \end{proof}

We end this section with a remark concerning the case of discrete generalized number systems.

\begin{remark}\label{Halasz rm 1}
All the results from this section cover the particular instance of multiplicative functions on a discrete generalized number system satisfying $|f(n_k)|\leq 1$ for every generalized integer $n_k$, provided the generalized number system has a positive density and a Chebyshev upper bound holds for the generalized primes. 

Given a multiplicative function $f$, the associated multiplicative measure is $\mathrm{d}G= f\mathrm{d}N$. The functions $f$ and $g$ in the representation $f\:\mathrm{d}N=\exp^{\ast}(g\: \mathrm{d} \Pi)$ determine one another by their values on generalized prime powers linked by means of the relations
\begin{equation}
\label{eqH2.9} 1+\sum_{\nu=1}^{\infty} \frac{f(p_{k}^{\nu})}{p_k^{s\nu}}=\prod_{\nu=1}^{\infty}\exp\left(\frac{g(p_k^{\nu})}{\nu p_{k}^{\nu s}}\right),
\end{equation}
which are obtained by comparing factors corresponding to each generalized prime $p_k$ in $\sum_{k=0}^{\infty} n^{-s}_{k}f(n_k)=\exp\left(\int^{\infty}_{1}x^{-s}g(x)\:\mathrm{d}\Pi(x)\right)$ with its Euler product. Taylor expanding the exponential and multiplying out the right-hand side of \eqref{eqH2.9}, one readily deduces that
\begin{equation}
\label{eqH2.10}
  f(p_k^{\nu})=\sum_{\nu=1\cdot m_1+2\cdot m_2+ \dots+\nu \cdot m_\nu}\: \prod_{j=1}^{\nu} \frac{1}{m_j!}\left(\frac{g(p^j_k)}{j}\right)^{m_j}.
\end{equation}
In particular, $g(p_k)=f(p_k)$ for every generalized prime. The formula \eqref{eqH2.10} can be rewritten in terms of the (exponential) complete Bell polynomials (see e.g. \cite[p.~134]{comtet1974}, where the notation $Y_n=B_n$ is employed),
$$
f(p_k^{\nu})=\frac{1}{\nu!} B_{\nu}(0! g(p_k), 1! g(p_k^2), \dots, (\nu-1)! g(p_k^{\nu})).
$$
Conversely, taking logarithms in \eqref{eqH2.9} and using \cite[Theorem A, p.~140]{comtet1974},
\begin{equation}
\label{eqH2.11}
g(p^{\nu}_{k})= \sum_{j=1}^{\nu} (-1)^{j-1}\frac{ (j-1)!}{(\nu-1)!}B_{\nu,j}(1!f(p_k),2!f(p_k^2), \dots, (\nu-j+1)! f(p_k^{\nu-j+1})),
\end{equation}
where the
$B_{\nu,j}$ stand for the partial Bell polynomials. In particular, if $f$ is completely multiplicative, we have $g(p_{k}^{\nu})=f(p_{k}^{\nu})$ for each $k$ and $\nu\geq 1$. In view of $|f(n_k)|\leq 1$, we find using \eqref{eqH2.11} and \cite[Eq.~(3h), Theorem~B, p.~135]{comtet1974}
\begin{equation*}
|g(p_k^{\nu})|\leq \sum_{j=1}^{\nu}\frac{ (j-1)!}{(\nu-1)!}B_{\nu, j}(1!,2!, \dots, (\nu-j+1)!)= 2^{\nu}-1.
\end{equation*}

We further decompose $g=\tilde{g}+h$ with 
\[
\tilde{g}(p_{k}^{\nu})= \begin{cases}
g(p_k^{\nu}) & \mbox{if } p_{k}>2\\
0 & \mbox{otherwise}.
\end{cases}
\]
It is clear that the multiplicative arithmetic measure $\mathrm{d}\tilde{G}=\exp^{\ast}(\tilde{g}\:\mathrm{d}\Pi)$ satisfies the hypotheses we have been considering in this section. The Mellin transform of $\mathrm{d}H=\exp^{\ast}(h\:\mathrm{d}\Pi)$ is simply  the Euler product
\[ \widehat{H}(s)= \prod_{p_k\leq 2} \left(1+\sum_{\nu=1}^{\infty} \frac{f(p_{k}^{\nu})}{p_{k}^{\nu s}}\right),
\]
which is obviously absolutely convergent for $\sigma>0$. Using that $\widehat{H}(s)\ll 1$ and $\widehat{H}'(s)\ll1$ on the half-plane $\sigma\geq 1$, the proof of Theorem \ref{Halasz th2} we give in Section \ref{section proof of Halasz} can readily be adapted to obtain \eqref{eqH2.2}  for $G(x)=\sum_{n_k\leq x}f(n_k)$ from \eqref{eqH2.3}, $|f(n_k)|\leq 1$, positive density $N(x)\sim ax$, and the Chebyshev upper bound. On the other hand, the conditions $|f(n_k)|\leq 1$ and $N(x)\sim ax$ imply that $G(x)/x$ is slowly oscillating, so that Theorem \ref{Halasz th3} is valid in this case. Applying Theorem \ref{Halasz th4} to $\tilde{G}$ and then Lemma \ref{Wintner lemma}(i) to $\mathrm{d}G= \mathrm{d}H \ast \mathrm{d}\tilde{G}$, the Hal\'{a}sz mean-value theorem takes the form: If there is $\alpha\in\mathbb{R}$ such that
\[
\sum_{k=1}^{\infty}\frac{1- \Re e\: (p_{k}^{-i\alpha}f(p_k))}{p_k}
\]
converges, then 
\[
\frac{1}{x}\sum_{n_k\leq x} f(n_k)= \frac{x^{i\alpha}}{1+i\alpha}\prod_{p_k\leq x}\left(1-\frac{1}{p_k}\right) \left(1+\sum_{\nu=1}^{\infty} \frac{f(p_{k}^{\nu})}{p_{k}^{\nu(1+i\alpha)}}\right) + o(1);
\]
otherwise, $f$ has zero mean-value. Moreover, the assertion in Corollary \ref{Halasz c3} becomes: If in addition $f$ is real-valued, we always have
\[
\lim_{x\to \infty}\frac{1}{x}\sum_{n_k\leq x} f(n_k)= \prod_{k=1}^{\infty}\left(1-\frac{1}{p_k}\right) \left(1+\sum_{\nu=1}^{\infty} \frac{f(p_{k}^{\nu})}{p_{k}^{\nu }}\right).
\]
\end{remark}

\section{Auxiliary elementary estimates}
\label{Section elementary estimates}
We start with a key estimate based on Rankin's method (cf. \cite[Section II.3]{S-S1994}).

\begin{proposition}
\label{prop Rankin} Let $\mathrm{d}G=\exp^{\ast}\left(g\:\mathrm{d}\Pi\right)$ be a multiplicative arithmetic measure such that
\begin{equation}
\label{Rankin bound eq 1}
\int_{1}^{x} |g(u)|\log u \: \mathrm{d}\Pi(u)\ll x \log ^{\beta} x, 
\end{equation}
with $\beta\geq 0$. Then, 
\begin{equation}
\label{Rankin bound eq 2}
\frac{G(x)}{x}\ll \log ^{\beta-1} x \exp\left( \int_{1}^{x} \frac{|g(u)|}{ u} \: \mathrm{d}\Pi(u)\right) .
\end{equation}
\end{proposition}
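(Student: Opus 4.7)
The plan is to apply the Rankin method in the form developed by Schwarz and Spilker for the Daboussi--Indlekofer proof of Hal\'{a}sz's theorem.

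First I reduce to $g\geq 0$. Since $\mathrm{d}G=\sum_{k\geq 0}(g\,\mathrm{d}\Pi)^{\ast k}/k!$, the total variation measure satisfies $\mathrm{d}|G|\leq \exp^{\ast}(|g|\,\mathrm{d}\Pi)=:\mathrm{d}\tilde{G}$, so $|G(x)|\leq \tilde{G}(x)$. It therefore suffices to bound $\tilde{G}$, whose generating data $|g|$ is non-negative and whose counting function is non-decreasing. Write $E(x):=\int_{1}^{x}|g(u)|u^{-1}\,\mathrm{d}\Pi(u)$ for brevity.

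The core of Rankin's trick reads: for any $\sigma>1$ in the convergence region,
\[
\tilde{G}(x)\leq \int_{1^{-}}^{x}(x/u)^{\sigma}\,\mathrm{d}\tilde{G}(u)\leq x^{\sigma}\widehat{\tilde{G}}(\sigma)\leq x^{\sigma}\exp\!\left(\int_{1}^{\infty}u^{-\sigma}|g(u)|\,\mathrm{d}\Pi(u)\right),
\]
using $\widehat{\tilde{G}}(\sigma)=\exp(\widehat{|g|\,\mathrm{d}\Pi}(\sigma))$. Setting the Rankin shift $\sigma=1+1/\log x$ (so $x^{\sigma}=ex$) and splitting the integral at $x$: the piece on $[1,x]$ is $\leq E(x)$ since $u^{-\sigma}\leq u^{-1}$; the tail $\int_{x}^{\infty}u^{-\sigma}|g|\,\mathrm{d}\Pi$ is estimated by integrating by parts against the hypothesis $T(x):=\int_{1}^{x}|g|\log u\,\mathrm{d}\Pi\ll x\log^{\beta}x$, with the substitution $v=(\sigma-1)\log u$ reducing it to an incomplete Gamma integral of order $\log^{\beta}x$ (or $\log\log x$ when $\beta=0$).

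The raw Rankin bound still falls short of the target by a factor of $\log x$, and closing this gap is the main obstacle. To do so I would exploit the $\log u$ weight in the hypothesis via the multiplicative convolution identity $\log u\,\mathrm{d}\tilde{G}=(|g(y)|\log y\,\mathrm{d}\Pi(y))\ast\mathrm{d}\tilde{G}$, equivalent to $-\widehat{\tilde{G}}{}'=\widehat{|g|\log y\,\mathrm{d}\Pi}\cdot\widehat{\tilde{G}}$; integration by parts on the left gives
\[
\tilde{G}(x)\log x=\int_{1}^{x}\frac{\tilde{G}(u)}{u}\,\mathrm{d}u+\int_{1}^{x}\tilde{G}(x/y)\,|g(y)|\log y\,\mathrm{d}\Pi(y).
\]
Feeding the preliminary Rankin estimate into the right-hand side, and using the Abel-summation consequence $\int_{1}^{x}|g|\log y/y\,\mathrm{d}\Pi\ll\log^{\beta+1}x$ of the $T$-hypothesis, one controls the right-hand side by $x\log^{\beta}x\exp(E(x))$ up to constants; dividing by $\log x$ produces the claimed $\tilde{G}(x)/x\ll\log^{\beta-1}x\exp(E(x))$. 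The delicate point is arranging the convolution step so that the $\log u$ weight simultaneously provides the missing factor of $\log x$ and collapses the exponential Rankin error $\exp(O(\log^{\beta}x))$ into the Mertens-type quantity $\exp(E(x))$ rather than compounding.
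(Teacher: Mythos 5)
Your proposal has a genuine gap. The raw Rankin bound $\tilde G(x)\le x^\sigma\,\widehat{\tilde G}(\sigma)$ with $\sigma=1+1/\log x$ produces a factor $\exp\!\left(\int_1^\infty u^{-\sigma}|g|\,\mathrm{d}\Pi\right)=\exp\!\bigl(E(x)+O(\log^\beta x)\bigr)$, and the extra $\exp(O(\log^\beta x))$ is much larger than the polylogarithmic factor you are aiming for (it is superpolylogarithmic for $0<\beta<1$ and a power of $x$ already for $\beta=1$). You acknowledge this, but the convolution step you propose to repair it does not close the gap. In the identity $\tilde G(x)\log x=\int_1^x \tilde G(u)u^{-1}\,\mathrm{d}u+\int_{1^-}^x\tilde G(x/y)\,|g(y)|\log y\,\mathrm{d}\Pi(y)$, feeding the lossy Rankin estimate into $\tilde G(x/y)$ and pairing with $\int_1^x |g|y^{-1}\log y\,\mathrm{d}\Pi\ll\log^{\beta+1}x$ (Abel summation, as you suggest) only yields $x\log^{\beta+1}x\exp\bigl(E(x)+O(\log^\beta x)\bigr)$, so the parasitic exponential compounds rather than collapses. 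You also offer no way to control the first term $\int_1^x \tilde G(u)u^{-1}\,\mathrm{d}u$, which is of the same order as the quantity $\tilde G(x)\log x$ you are bounding and cannot simply be absorbed.

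The paper avoids the raw Rankin/Mellin bound altogether. It instead estimates $\int_1^x\log u\,|\mathrm{d}G(u)|$ directly from $\log\cdot\mathrm{d}G=\mathrm{d}G\ast(g\log\,\mathrm{d}\Pi)$: exchanging the order of integration, the inner integral $\int_1^{x/u}|g|\log v\,\mathrm{d}\Pi(v)$ is bounded by the hypothesis as $(x/u)\log^\beta(x/u)$, and then $\log^\beta(x/u)\le\log^\beta x$ together with $\int_{1^-}^x u^{-1}|\mathrm{d}G|\le\exp(E(x))$ gives the clean bound $x\log^\beta x\exp(E(x))$ with no parasitic exponential. The remaining step is the genuine Rankin trick in its $\sqrt{x}$-split form: $G(x)\ll\int_1^{\sqrt{x}}\log u\,|\mathrm{d}G|+\frac{2}{\log x}\int_{\sqrt{x}}^x\log u\,|\mathrm{d}G|$, which extracts $G(x)$ from the $\log$-weighted variation and produces the desired division by $\log x$. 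Both of these ingredients — direct Fubini-style estimation of the $\log$-weighted variation, and the $\sqrt{x}$-split rather than the Mellin-transform shift — are missing from your outline, and without them the argument does not reach the target bound.
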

\begin{proof}
We first estimate $\int_{1}^{x}\log u\: |\mathrm{d}G(u)|$. Note that the multiplication by $\log$ operator is a derivation on the convolution algebra of measures \cite[Section 2.8]{diamond-zhangbook}. We have $\log \cdot \mathrm{d}G= \mathrm{d}G\ast \left( g \cdot  \log\: \mathrm{d}\Pi\right)$ and so \eqref{Rankin bound eq 1} yields
$$
\int_{1}^{x}\log u\: |\mathrm{d}G(u)|\ll \int_{1^{-}}^{x} \frac{x}{u}\log^{\beta}\left(\frac{x}{u}\right)|\mathrm{d}G(u)|\ll x \log^{\beta}x \exp\left( \int_{1}^{x}\frac{|g(u)|}{ u} \: \mathrm{d}\Pi(u)\right),
$$
where we have used that multiplying by $1/u$ commutes with the exponential of measures. We now apply Rankin's trick,
\begin{align*}
G(x)&\ll \int_{1}^{\sqrt{x}} \log u \:|\mathrm{d}G(u)|+ \frac{2}{\log x} \int_{\sqrt{x}}^{x} \log u \:|\mathrm{d}G(u)|
\\
&
\ll (\sqrt {x} \: \log^{\beta}x + x \log^{\beta-1}x ) \exp\left( \int_{1}^{x}\frac{|g(u)|}{ u} \: \mathrm{d}\Pi(u)\right).
\end{align*}
\end{proof}
From here we deduce:
\begin{corollary}
\label{lemma Rankin}
Suppose that $\int_{1}^{x}u^{-1}\:\mathrm{d} \Pi(u)\leq \log \log x +O(1)$. If $\mathrm{d}G=\exp^{\ast}\left(g\:\mathrm{d}\Pi\right)$ is such that 
\begin{equation}
\label{Rankin eq 3}
\int_{1}^{x} |g(u)|\log u \: \mathrm{d}\Pi(u)\ll x,
\end{equation}
  then
 \begin{equation}
\label{Rankin eq 6}
\frac{G(x)}{x}\ll \exp\left(\int_{1}^{x}\frac{|g(u)|-1}{u}\: \mathrm{d}\Pi (u) \right),
\end{equation} 
 
\begin{equation}
\label{Rankin eq 4}\int_{\Re e\: s=\sigma} \left|\frac{\widehat{G}(s)}{s}\right|^{2}|\mathrm{d}s|\ll \int_{0}^{\infty} e^{-2 y (\sigma-1)}\exp\left(2\int_{1}^{e^y}\frac{|g(u)|-1}{u}\: \mathrm{d}\Pi (u) \right)\mathrm{d}y  
\end{equation}
as $\sigma\to 1^{+}$, and
\begin{equation}
\label{Rankin eq 5}
\int_{\Re e\: s=\sigma} \left|\frac{\widehat{G}'(s)}{s\widehat{G}(s)}\right|^{2}|\mathrm{d}s|\ll \frac{1}{\sigma-1}.
\end{equation}
\end{corollary}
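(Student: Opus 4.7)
The plan is to derive \eqref{Rankin eq 6} first, since it feeds directly into the two Plancherel-type estimates \eqref{Rankin eq 4} and \eqref{Rankin eq 5}.

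For \eqref{Rankin eq 6} I would apply Proposition~\ref{prop Rankin} with $\beta=0$; indeed, hypothesis \eqref{Rankin eq 3} is precisely the $\beta=0$ instance of \eqref{Rankin bound eq 1}. The conclusion is
$$
\frac{G(x)}{x}\ll \frac{1}{\log x}\exp\!\left(\int_{1}^{x}\frac{|g(u)|}{u}\,\mathrm{d}\Pi(u)\right).
$$
Splitting $|g(u)|=(|g(u)|-1)+1$ in the exponent and inserting the standing hypothesis $\int_{1}^{x}u^{-1}\,\mathrm{d}\Pi(u)\leq \log\log x+O(1)$, the prefactor $(\log x)^{-1}\exp(\int_{1}^{x}u^{-1}\mathrm{d}\Pi(u))$ is $O(1)$, leaving exactly \eqref{Rankin eq 6}.

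For \eqref{Rankin eq 4} I would invoke Plancherel for the Mellin transform. For $\sigma>1$, an integration by parts using $G(1^{-})=0$ and the bound $G(x)\ll x$ just established gives
$$
\frac{\widehat G(\sigma+it)}{\sigma+it}=\int_{0}^{\infty}e^{-(\sigma+it)y}G(e^{y})\,\mathrm{d}y,
$$
so that $\widehat G(\sigma+it)/(\sigma+it)$ is (up to a reflection) the Fourier transform in $t$ of $y\mapsto e^{-\sigma y}G(e^{y})\mathbf 1_{[0,\infty)}(y)$. Parseval's identity turns the $L^{2}$-norm along $\Re e\,s=\sigma$ into $2\pi\int_{0}^{\infty}e^{-2(\sigma-1)y}|G(e^{y})/e^{y}|^{2}\,\mathrm{d}y$, and substituting the square of \eqref{Rankin eq 6} yields \eqref{Rankin eq 4} at once.

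For \eqref{Rankin eq 5} I would exploit the logarithmic derivative identity $\widehat G'(s)/\widehat G(s)=-\int_{1}^{\infty}u^{-s}g(u)\log u\,\mathrm{d}\Pi(u)$, obtained by differentiating $\widehat G=\exp(\widehat{g\,\mathrm{d}\Pi})$. Setting $H(x):=\int_{1}^{x}g(u)\log u\,\mathrm{d}\Pi(u)$, hypothesis \eqref{Rankin eq 3} gives the crucial uniform bound $|H(x)|\ll x$; the same integration by parts as above then converts $\widehat G'(s)/(s\widehat G(s))$ into $-\int_{0}^{\infty}e^{-sy}H(e^{y})\,\mathrm{d}y$, and Parseval bounds the left-hand side of \eqref{Rankin eq 5} by
$$
2\pi\int_{0}^{\infty}e^{-2(\sigma-1)y}\left|\frac{H(e^{y})}{e^{y}}\right|^{2}\mathrm{d}y\ll \int_{0}^{\infty}e^{-2(\sigma-1)y}\,\mathrm{d}y\ll\frac{1}{\sigma-1}.
$$
The only real care point in the entire argument is the bookkeeping in \eqref{Rankin eq 6}, where the factor $\log^{-1}x$ produced by Proposition~\ref{prop Rankin} at $\beta=0$ must cancel precisely against the Mertens-type bound on $\int u^{-1}\mathrm{d}\Pi$; once \eqref{Rankin eq 6} is in hand, the two Plancherel estimates are routine consequences of $L^{2}$-Fourier theory.
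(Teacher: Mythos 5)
Your proposal is correct and follows essentially the same route as the paper: Proposition~\ref{prop Rankin} with $\beta=0$ combined with the Mertens-type bound on $\int_1^x u^{-1}\mathrm{d}\Pi(u)$ to get \eqref{Rankin eq 6}, then Plancherel applied to $\widehat G(s)/s$ (Fourier transform of $e^{-\sigma y}G(e^y)$) for \eqref{Rankin eq 4}, and Plancherel applied to $-\widehat G'(s)/(s\widehat G(s))$, whose underlying primitive $\int_1^x g(u)\log u\,\mathrm{d}\Pi(u)$ is $O(x)$ by \eqref{Rankin eq 3}, for \eqref{Rankin eq 5}. You simply spell out the integrations by parts and the cancellation of the $\log^{-1}x$ prefactor that the paper leaves implicit.
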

\begin{proof}
Proposition \ref{prop Rankin} gives \eqref{Rankin eq 6}.
The bound \eqref{Rankin eq 4} then follows from the Plancherel identity because for fixed $\sigma$ the function 
$\widehat{G}(s)/s$ is the Fourier transform of $G(e^{y})e^{-\sigma y}.$
Next, $-\widehat{G}'(s)/\widehat{G}(s)$ is the Mellin transform of the measure $g(u) \log u \mathrm{d}\Pi(u)$, whose primitive is $O(x)$ by \eqref{Rankin eq 3}. So, the Plancherel theorem implies \eqref{Rankin eq 5}. 
\end{proof}

Let us point out that $\int_{1}^{x}u^{-1}\:\mathrm{d} \Pi(u)\leq \log \log x +O(1)$ implies upper logarithmic density $\int_{1}^{x}u^{-1}\:\mathrm{d} N(u)\ll \log x$. Moreover, the condition \eqref{eqH2.1} turns out to be equivalent to a weak form of Mertens' formula. 

\begin{lemma}\label{lemma weak Mertens} \eqref{eqH2.1} holds if and only if
\begin{equation}
\label{weak Mertens}
\int_{1}^{x}\frac{\mathrm{d} \Pi(u)}{u}= \log \log x +O(1).
\end{equation}
In addition, these relations are equivalent to $\zeta(\sigma)\asymp 1/(\sigma-1).$
\end{lemma}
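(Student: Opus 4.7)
My plan is to establish the three-way equivalence by routing through the Abel-summed representations of $\zeta(\sigma)$ and of $\mathcal{P}(\sigma):=\log\zeta(\sigma)=\int_1^\infty u^{-\sigma}\,\mathrm{d}\Pi(u)$. Setting $A(x):=\int_{1^-}^x \mathrm{d}N(u)/u$ and $R(x):=\int_1^x \mathrm{d}\Pi(u)/u$, both non-decreasing with $A(1)=1$ and $R(1)=0$, integration by parts yields
\[
\zeta(\sigma)=(\sigma-1)\int_1^\infty u^{-\sigma}A(u)\,\mathrm{d}u,\qquad \mathcal{P}(\sigma)=(\sigma-1)\int_1^\infty u^{-\sigma}R(u)\,\mathrm{d}u,
\]
so that \eqref{eqH2.1} reads $A(x)\asymp\log x$ while \eqref{weak Mertens} reads $R(x)=\log\log x+O(1)$.

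For the equivalence between \eqref{eqH2.1} and the relation $\zeta(\sigma)\asymp 1/(\sigma-1)$, the Abelian direction is a direct substitution into the first identity using $(\sigma-1)\int_1^\infty u^{-\sigma}\log u\,\mathrm{d}u=1/(\sigma-1)$. The Tauberian converse is the two-sided Karamata theorem for the non-decreasing function $\tilde A(y):=A(e^y)$: with $\lambda:=\sigma-1$ and $y=\log u$, the Mellin identity becomes $\int_0^\infty e^{-\lambda y}\,\mathrm{d}\tilde A(y)=\zeta(1+\lambda)\asymp 1/\lambda$, and the standard Karamata approximation argument (applied separately to $\limsup$ and $\liminf$) yields $\tilde A(y)\asymp y$.

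The link with \eqref{weak Mertens} rests on the explicit computation
\[
(\sigma-1)\int_1^\infty u^{-\sigma}\log\log u\,\mathrm{d}u=\lambda\int_0^\infty e^{-\lambda y}\log y\,\mathrm{d}y=-\gamma-\log\lambda,
\]
obtained from $\int_0^\infty e^{-t}\log t\,\mathrm{d}t=-\gamma$ via $y=\log u$ and $t=\lambda y$. Substituting $R(u)=\log\log u+O(1)$ into the Abel identity for $\mathcal{P}$ and exponentiating gives the Abelian direction from \eqref{weak Mertens} to $\zeta(\sigma)\asymp 1/(\sigma-1)$.

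The main obstacle is the Tauberian converse: recovering $R(x)=\log\log x+O(1)$ with the sharp leading constant $1$ from $\mathcal{P}(\sigma)+\log\lambda=O(1)$. I plan to first establish the upper bound $R(x)\ll\log\log x$ via the elementary inequalities $e^{-1}R(x)\leq\int_1^x u^{-\sigma}\,\mathrm{d}\Pi(u)\leq\mathcal{P}(\sigma)$ at $\sigma=1+1/\log x$, and then pin down the precise constant by a dyadic bootstrap exploiting the two-parameter identity $\mathcal{P}(\sigma)-\mathcal{P}(2\sigma-1)=\log 2+O(1)$, which combined with the upper bound and the monotonicity of $R$ forces $R(x^2)-R(x)=\log 2+O(1)$ on each dyadic scale; summing over dyadic intervals then yields the matching lower bound $R(x)\geq\log\log x-O(1)$.
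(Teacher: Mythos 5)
Your first equivalence ($\eqref{eqH2.1}\Leftrightarrow\zeta(\sigma)\asymp 1/(\sigma-1)$) is fine: the Abelian direction is a direct computation, and the Tauberian direction follows from a two‑sided Karamata argument for the non‑decreasing function $A$, exactly as you sketch. The genuine gap is in the Tauberian direction of your second equivalence, going from $\mathcal{P}(\sigma)=-\log(\sigma-1)+O(1)$ to $R(x)=\log\log x+O(1)$. Your Rankin‑type upper bound only yields $R(x)\le e\log\log x+O(1)$ (optimizing $\sigma$ improves the constant but still leaves an $O(\log\log\log x)$ error, never $O(1)$), so the sharp constant $1$ is not yet in hand. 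The proposed dyadic bootstrap does not close this: (a) the weight $u^{-\sigma}(1-u^{1-\sigma})$ in $\mathcal{P}(\sigma)-\mathcal{P}(2\sigma-1)$ is spread over the whole range $1\le u<\infty$ and is not comparable to an indicator of $[x,x^2]$, so monotonicity of $R$ plus the crude upper bound do not force $R(x^2)-R(x)=\log 2+O(1)$; and (b) even granting that increment estimate with a uniform $O(1)$, summing over the $\asymp\log\log x$ dyadic scales accumulates an error of size $O(\log\log x)$, which only gives $R(x)\asymp\log\log x$ with some constants, not $R(x)=\log\log x+O(1)$. So the lower bound with the sharp leading constant is not established.

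This is precisely why the paper routes the link between $\eqref{eqH2.1}$ and $\eqref{weak Mertens}$ through the $\exp^{*}$ structure $\mathrm{d}N=\exp^{*}(\mathrm{d}\Pi)$ rather than through a scalar Tauberian theorem for the Laplace transform: it invokes \cite[Proposition 4.5 and Theorem 4.7]{diamond-zhangbook} for $\eqref{eqH2.1}\Leftrightarrow\eqref{weak Mertens}$, and \cite[Proposition 4.2, Proposition 4.8, Corollary 4.10]{diamond-zhangbook} for $\eqref{eqH2.1}\Leftrightarrow\zeta(\sigma)\asymp 1/(\sigma-1)$. The $O(1)$ form of Mertens' theorem is a genuinely structural fact about the pair $(N,\Pi)$ and does not follow solely from $R$ being non‑decreasing together with $\int_0^\infty e^{-\lambda y}\,\mathrm{d}\tilde R(y)=\log(1/\lambda)+O(1)$; the Hardy--Littlewood--Karamata theorem only yields $\tilde R(y)\sim\log y$ under those hypotheses. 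To repair your proof you would need to replace the dyadic bootstrap by an argument exploiting $\mathrm{d}A=\exp^{*}(\mathrm{d}R)$, for instance comparing $A$ with $\exp(R)$ and its convolution inverse; simply manipulating the Laplace transform of $R$ will not give the required error bound.
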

\begin{proof} Combine \cite[Proposition 4.5 and Theorem 4.7]{diamond-zhangbook} for \eqref{eqH2.1} if and only if \eqref{weak Mertens}. The equivalence between \eqref{eqH2.1} and $\zeta(\sigma)\asymp 1/(\sigma-1)$ follows from \cite[Proposition 4.2, Proposition 4.8, and Corollary 4.10]{diamond-zhangbook}.
\end{proof}

We refer the reader to \cite{diamond-zhangbook,Pollack2013} for more information on Mertens type results for Beurling numbers.

 We shall also need the ensuing simple lemma. Note that part (i) is a version of Wintner's mean-value theorem.

\begin{lemma}
\label{Wintner lemma}
Let $A$ and $B$ be two functions of local bounded variation on $[1,\infty)$ such that $\int_{1^{-}}^{\infty} u^{-1}|\mathrm{d}A(u)|<\infty$. Consider $\mathrm{d}D=\mathrm{d}A\ast \mathrm{d}B$. Given a slowly varying function $\ell$ with $|\ell(u)|=1$
and $\alpha,b\in\mathbb{R}$, we have:
\begin{itemize}
\item [(i)] $\displaystyle B(x)= b x^{1+i\alpha} \ell(x)+o(x)$ implies $\displaystyle D(x)= \widehat{A}(1+i\alpha)bx^{1+i\alpha} \ell(x)+o(x)$.
\item [(ii)] $ \int_{1}^{x}u^{-1}B(u)\mathrm{d}u=b x^{1+i\alpha} \ell(x)+o(x) $ implies 
$$ \int_{1}^{x}\frac{D(u)}{u}\mathrm{d}u=\widehat{A}(1+i\alpha)bx^{1+i\alpha} \ell(x)+o(x).$$
\end{itemize}

\end{lemma}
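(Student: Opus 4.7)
The plan is to prove part (i) directly from the Mellin--Stieltjes convolution formula $D(x) = \int_{1^-}^x B(x/u)\,\mathrm{d}A(u)$, and then derive part (ii) by a Fubini interchange showing that the logarithmic mean of $D$ satisfies the analogous convolution identity with the logarithmic mean of $B$, reducing part (ii) to part (i).

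For (i), I would split the integration range at $x/T$ for a large fixed parameter $T$. On the tail $u\in(x/T,x]$ the argument $x/u$ stays in $[1,T]$, so $|B(x/u)|$ is uniformly bounded by local bounded variation of $B$, and this contribution is $O_T\!\left(\int_{x/T}^\infty u^{-1}|\mathrm{d}A(u)|\right)=o_x(1)$ by the integrability hypothesis on $\mathrm{d}A$. On the main piece $u\in[1,x/T]$, I would insert the asymptotic $B(y)=b\,y^{1+i\alpha}\ell(y)+r(y)$ with $r(y)=o(y)$: the remainder contributes at most $\sup_{y\geq T}|r(y)/y|\cdot\int_{1^-}^\infty u^{-1}|\mathrm{d}A(u)|$, which tends to zero as $T\to\infty$, while the leading piece equals
\[
b\,x^{1+i\alpha}\ell(x)\int_{1^-}^{x/T}u^{-1-i\alpha}\frac{\ell(x/u)}{\ell(x)}\,\mathrm{d}A(u).
\]
Slow variation of $\ell$ gives the pointwise convergence $\ell(x/u)/\ell(x)\to 1$ for each fixed $u$, and the assumption $|\ell|\equiv 1$ supplies the dominating integrand $u^{-1}|\mathrm{d}A(u)|$; dominated convergence then forces this integral to tend to $\widehat{A}(1+i\alpha)$. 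Chaining the three estimates, letting $x\to\infty$ and then $T\to\infty$, produces $D(x)=\widehat{A}(1+i\alpha)\,b\,x^{1+i\alpha}\ell(x)+o(x)$.

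For (ii), setting $B_1(x)=\int_1^x u^{-1}B(u)\,\mathrm{d}u$ and $D_1(x)=\int_1^x u^{-1}D(u)\,\mathrm{d}u$, a Fubini interchange in the defining double integral followed by the substitution $w=v/u$ yields
\[
D_1(x)=\int_{1^-}^x B_1(x/u)\,\mathrm{d}A(u),
\]
so $B_1$ and $D_1$ satisfy exactly the convolution relation that $B$ and $D$ do. The hypothesis of (ii) is precisely the hypothesis of (i) for the pair $(B_1,D_1)$, and its conclusion is exactly what (ii) asserts.

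The main obstacle I anticipate is controlling the slowly varying factor $\ell(x/u)/\ell(x)$ on the main piece of the integral uniformly enough to commute it through the convolution. The saving grace is that although slow variation gives only pointwise convergence, the assumption $|\ell|\equiv 1$ supplies an immediate uniform bound $|\ell(x/u)/\ell(x)|=1$ and thereby a dominating function; without this unimodular hypothesis one would need a uniform slow-variation bound (for example via Karamata's representation) and a considerably more delicate splitting argument.
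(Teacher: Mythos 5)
Your proof is correct and follows essentially the same route as the paper: part~(i) is the identity $D(x)/(x^{1+i\alpha}\ell(x))=\int_{1^-}^{x}\frac{B(x/u)}{(x/u)^{1+i\alpha}\ell(x/u)}\cdot\frac{\ell(x/u)}{\ell(x)}\cdot u^{-1-i\alpha}\,\mathrm{d}A(u)$ passed to the limit by dominated convergence (your split at $x/T$ is just this argument carried out by hand, using $|\ell|\equiv 1$ and $|B(y)|/y\ll 1$ for the dominating bound $u^{-1}|\mathrm{d}A(u)|$), and part~(ii) reduces to part~(i) by observing that the logarithmic mean commutes with convolution by $\mathrm{d}A$ (your Fubini computation is the same as the paper's factorization through $\mathrm{d}H(u)=u^{-1}\chi_{[1,\infty)}(u)\,\mathrm{d}u$, since $\int_1^x u^{-1}B(u)\,\mathrm{d}u=(\mathrm{d}H\ast\mathrm{d}B)(x)$).
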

\begin{proof}
For (i), we have
$$
\frac{1}{x^{1+i\alpha}\ell(x)}\int_{1^{-}}^{x} \mathrm{d}A\ast \mathrm{d}B= \int_{1^{-}}^{x} \frac{B(x/u)}{(x/u)^{1+i\alpha}\ell(x/u)}\frac{\ell(x/u)}{\ell(x)} \:\frac{\mathrm{d}A(u)}{u^{1+i\alpha}}\to b \int_{1^{-}}^{\infty}\frac{\mathrm{d}A(u)}{u^{1+i\alpha}}. 
$$
For (ii), we notice that 
$$\int_{1}^{x}\frac{B(u)}{u}\:\mathrm{d}u= \int_{1^{-}}^{x}\log(x/u)\mathrm{d}B(u)= \int_{1^{-}}^{x} \mathrm{d}H\ast \mathrm{d}B,
$$
with $\mathrm{d}H(u)= u^{-1}\chi_{[1,\infty)}(u)\mathrm{d}u$. So, 
$$\int_{1}^{x}\frac{D(u)}{u}\mathrm{d}u= \int_{1^{-}}^{x} \mathrm{d}H\ast \mathrm{d}B\ast \mathrm{d}A,$$
whence we conclude that part (ii) is a special case of part (i).
\end{proof}

Finally, we translate \eqref{eqH2.2} into another weighted average for $\mathrm{d}G$.

\begin{lemma}
\label{Halasz l 1}
Let $G$ be a function of local bounded variation on $[1,\infty)$ such that $\int_{1}^{x}u^{-1}G(u)\mathrm{d}u = o(x\log x)$.
Consider 
\begin{equation}
\label{Proof halasz eq 1} F(x)=\int_{1}^{x}\left(\int_{1}^{u}\log y\:
  \mathrm{d}G(y) \right)\frac{\mathrm{d}u}{u}.
\end{equation}
Then, for 
$\ell$ slowly varying with $|\ell(u)|=1$ and some constant $c\in\mathbb{R}$,
$$
\int_{1}^{x}\frac{G(u)}{u}\:\mathrm{d}u = cx\ell (x)+o(x) \qquad \mbox{if and only if } \qquad F(x)=c x\ell(x)\log x+o(x\log x).
$$
\end{lemma}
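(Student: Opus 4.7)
The plan is to reduce everything to a clean identity between $F$ and two iterated logarithmic means of $G$. Set
$$I(x)=\int_{1}^{x}\frac{G(u)}{u}\,\mathrm{d}u, \qquad J(x)=\int_{1}^{x}\frac{I(u)}{u}\,\mathrm{d}u.$$
First I would use integration by parts (Stieltjes), together with the fact that $\log 1=0$ kills the boundary term at $u=1$, to write
$$\int_{1}^{u}\log y\,\mathrm{d}G(y)=G(u)\log u-I(u).$$
Inserting this into \eqref{Proof halasz eq 1} and integrating the first resulting term by parts via $\int_{1}^{x}G(u)\log u\,u^{-1}\,\mathrm{d}u=\int_{1}^{x}\log u\,\mathrm{d}I(u)=I(x)\log x-J(x)$ yields the master identity
$$F(x)=I(x)\log x-2J(x).$$

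For the forward direction, assume $I(x)=cx\ell(x)+o(x)$. Then $I(u)/u=c\ell(u)+o(1)$, so $J(x)=c\int_{1}^{x}\ell(u)\,\mathrm{d}u+o(x)$. A Karamata-type argument—substitute $u=xt$, use that $\ell(xt)/\ell(x)\to 1$ locally uniformly on $(0,\infty)$ for slowly varying $\ell$, and dominate by the bound $|\ell|=1$—gives $\int_{1}^{x}\ell(u)\,\mathrm{d}u=x\ell(x)+o(x)$. Hence $J(x)=cx\ell(x)+o(x)$, and the identity immediately produces $F(x)=cx\ell(x)\log x+o(x\log x)$, since the stray $-2cx\ell(x)$ term is $O(x)=o(x\log x)$.

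For the converse, assume $F(x)=cx\ell(x)\log x+o(x\log x)$ and invoke the standing hypothesis $I(x)=o(x\log x)$: this forces $I(u)/u=o(\log u)$, which integrates to $J(x)=o(x\log x)$. Substituting into the identity in the form $I(x)\log x=F(x)+2J(x)$ and dividing by $\log x$ delivers $I(x)=cx\ell(x)+o(x)$, as desired.

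The only mildly delicate point is the asymptotic $\int_{1}^{x}\ell(u)\,\mathrm{d}u\sim x\ell(x)$ for a complex-valued slowly varying $\ell$ with $|\ell|=1$; but this is precisely where the modulus-one condition is used, reducing it to bounded dominated convergence after the change of variables. The rest is bookkeeping with two integrations by parts and the Tauberian hypothesis $I(x)=o(x\log x)$.
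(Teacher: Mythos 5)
Your proof is correct and takes essentially the same route as the paper: integrate by parts twice (Stieltjes to get rid of $\mathrm{d}G$, then again to get rid of the extra $\log u$) to relate $F$ to $I(x)=\int_1^x G(u)u^{-1}\,\mathrm{d}u$ and $J(x)=\int_1^x I(u)u^{-1}\,\mathrm{d}u$, then observe that the standing hypothesis $I(x)=o(x\log x)$ forces $J(x)=o(x\log x)$. The only point of divergence is cosmetic: you make the identity $F(x)=I(x)\log x - 2J(x)$ fully explicit (the paper keeps the two $J$-contributions separate), and in the forward direction you compute $J(x)=cx\ell(x)+o(x)$ via a Karamata-type argument. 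That extra precision is not needed — once $I(x)=cx\ell(x)+o(x)$, you already have $I(x)=O(x)\subset o(x\log x)$, hence $J(x)=o(x\log x)$, and the crude bound already absorbs the $-2J(x)$ term into the $o(x\log x)$ error exactly as in the paper's proof and exactly as you do for the converse.
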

\begin{proof}
Integrating by parts, $ \int_{1}^{x}u^{-1}G(u)\:\mathrm{d}u \sim cx \ell(x) $ is equivalent to 
$$
\int_{1}^{x} \log u \frac{G(u)}{u}\mathrm{d}u\sim c x\ell(x)\log x.
$$
We also have
$$
F(x)= \int_{1}^{x} \log u \frac{G(u)}{u}\mathrm{d}u- \int_{1}^{x} \frac{1}{u}\int_{1}^{u} \frac{G(y)}{y} \mathrm{d}y \mathrm{d}u=  \int_{1}^{x} \log u \frac{G(u)}{u}\mathrm{d}u+ o(x\log x),
$$
whence the claim follows.
\end{proof}
\section{Proof of Theorem \ref{Halasz th2}}
\label{section proof of Halasz}
We start with some reductions. We only need to show that \eqref{eqH2.3} implies \eqref{eqH2.2}. The same reasoning employed at the beginning of the proof of Theorem \ref{Halasz th3} and Lemma \ref{Wintner lemma}(ii) allow us to assume without loss of generality that $g_2=0$. So, our hypotheses on $g$ are $|g(x)|\leq 1$ and \eqref{Rankin eq 3}.
 We may also assume that $\alpha=0$, namely, we are supposing that 
 \begin{equation}
\label{eqH4.1}
\widehat{G}(s)=  \frac{c}{s-1} L\left(\frac{1}{\sigma-1}\right)+o\left(\frac{1}{\sigma-1}\right),
\end{equation}
uniformly for $t$ on compact intervals. In view of Lemma \ref{lemma weak Mertens}, the bound \eqref{Rankin eq 6} from Corollary \ref{lemma Rankin} applies, so $G(x)\ll x$. Therefore, $G$ fulfills the conditions of Lemma \ref{Halasz l 1} and from now on we can restrict our attention to the function $F$ defined in \eqref{Proof halasz eq 1}. 

We should prove that
\begin{equation}
\label{eqH4.2}
F(x)=c x L(\log x)\log x+o(x\log x).
\end{equation}
The Mellin-Stieltjes transform of the function $F$ is 
$
\widehat{F}(s)=-\widehat{G}'(s)/s.
$
Given $x > e$, it is convenient to set $\sigma_{x}=1+1/\log x$.  By the
Perron inversion formula, we have

\begin{equation*}
\frac{F(x)}{x}=-\frac{1}{2\pi i}\int_{\Re e\: s=\sigma_{x}}
\frac{x^{s-1}\widehat{G}'(s)}{s^{2}}\, \mathrm{d}s.
\end{equation*}
Next, we take a large number $\lambda>1$, fixed for the while.  We
split the integral over the line $\{s:\Re e\: s=\sigma_{x}\}$ into three pieces, taken over 
\begin{align*}\Gamma_0&= \{\sigma_{x}+it:\: |t|\leq
\lambda/\log x \},\\
\Gamma_1&=  \{\sigma_{x}+it:\: \lambda/\log x<|t|\leq \lambda\},
\\
\Gamma_2&=  \{\sigma_{x}+it:\: \lambda<|t|\} .
\end{align*}

The integral over $\Gamma_0$ can easily be handled using the condition \eqref{eqH4.1} and the fact that $L$ is slowly varying; proceeding exactly as in \cite[p.~239]{ElliottBookI}, we obtain
$$
-\frac{1}{2\pi i}\int_{\Gamma_0}
\frac{x^{s-1}\widehat{G}'(s)}{s^{2}}\, \mathrm{d}s= cL(\log x)\log x+ O\left(\frac{\log x}{\lambda}\right).
$$

We now employ \eqref{Rankin eq 5} and the Cauchy-Schwarz inequality in order to get
$$
\int_{\Gamma_j}
\frac{x^{s-1}\widehat{G}'(s)}{s^{2}}\, \mathrm{d}s\ll \log^{1/2}x \left( \int_{\Gamma_{j}} \left|
\frac{\widehat{G}(s)}{s}\right|^{2} |\mathrm{d}s| \right)^{1/2}, \qquad j=1,2.
$$
It remains to estimate the latter two integrals. For the integral over the unbounded intervals $\Gamma_2$, we can apply \eqref{Rankin eq 4} to the multiplicative arithmetic measures 
$$\exp^{\ast}\left(u^{\pm i(\lambda+m)  }g(u)\mathrm{d}\Pi(u)\right);$$ 
hence,
\begin{align*}
&\int_{\Gamma_{2}} \left|
\frac{\widehat{G}(s)}{s}\right|^{2} |\mathrm{d}s|\leq \sum_{m=0}^{\infty}\frac{1}{1+(\lambda+m)^{2}}\left(\int_{-\lambda -m-1}^{-\lambda-m} +\int_{\lambda+m}^{\lambda+m+1}
\right)\left|\widehat{G}( \sigma_{x}+it)\right|^{2} \mathrm{d}t
\\
&
\leq 5 \sum_{m=0}^{\infty}\frac{1}{1+(\lambda+m)^{2}}\int_{\Re e\: s=\sigma_{x}}\left( \left|\frac{\widehat{G}(s+i(\lambda+m))}{s}\right|^{2}+ \left|\frac{\widehat{G}( s-i(1+\lambda+m))}{s}\right|^{2}\right) |\mathrm{d}s|
\\
&
\ll \frac{1}{\lambda}\int_{0}^{\infty} e^{-2(\sigma_{x} -1)y}\mathrm{d}y=  O\left(\frac{\log x}{\lambda}\right).
\end{align*}
On the other hand, using \eqref{eqH4.1}, 
$$
\int_{\Gamma_{1}} \left|
\frac{\widehat{G}(s)}{s}\right|^{2} |\mathrm{d}s|\leq \left(|c| \frac{\log x}{\lambda} +o_\lambda (\log x)\right)^{1/2} \int_{\Re e\:s=\sigma_{x}} \left|
\frac{(\widehat{G}(s))^{3/4}}{s}\right|^{2} |\mathrm{d}s|.
$$
To deal with the last integral we notice that $(\widehat{G}(s))^{3/4}$ is the Mellin transform of the arithmetic measure 
$\exp^{*}\left( 3g/4\: \mathrm{d}\Pi\right).$ Applying Plancherel's identity and \eqref{Rankin bound eq 2} to this measure, using the upper bound from \eqref{weak Mertens} and the hypothesis $|g(u)|\leq1$, we obtain
\begin{align*}
\int_{\Re e\:s=\sigma_{x}} \left|
\frac{(\widehat{G}(s))^{3/4}}{s}\right|^{2} |\mathrm{d}s|&\ll \int_{0}^{\infty} e^{-2 y (\sigma_{x}-1)} y^{-2}\exp\left(2\int_{1}^{e^y}\frac{(3/4)|g(u)|}{u}\: \mathrm{d}\Pi (u) \right)\mathrm{d}y
\\
& \ll \int_{0}^{\infty} e^{-2 y (\sigma_{x}-1)}y^{-1/2}\mathrm{d}y\ll \log^{1/2} x.
\end{align*}

Collecting all estimates, we arrive at 
$$
\frac{F(x)}{x\log xL(\log x)}- c \ll \frac{1}{\lambda^{1/4}} +o_{\lambda}(1).$$ 
Taking first the limit
superior as $x\to\infty$ and then $\lambda\to\infty$, we have shown \eqref{eqH4.2}. This establishes Theorem \ref{Halasz th2}.

\begin{remark}
It is worth pointing out that we have not used the lower bound from \eqref{weak Mertens} in this section. Thus, our proof above shows that Theorem \ref{Halasz th2} still holds true if the hypothesis \eqref{eqH2.1} is relaxed to $\int_{1}^{x}u^{-1}\mathrm{d}\Pi(u)\leq \log \log x +O(1)$.
\end{remark}

\end{document}